\documentclass{amsart}

\usepackage{amssymb}
\usepackage[stretch=10,shrink=10]{microtype}
\usepackage[showframe=false, margin = 1.5 in]{geometry}
\usepackage{mathtools}
\usepackage[shortlabels]{enumitem}
\usepackage{theoremref}
\usepackage{tikz}
\usepackage{subfigure}
\usepackage{pgfplots}
\usepackage{comment}
\usepackage{mathrsfs}

\renewcommand{\emptyset}{\varnothing}
\newcommand{\ts}[0]{\tau^*}

\newcommand{\E}{\mathbf{E}}
\renewcommand{\P}{\mathbf{P}}

\newcommand{\f}{\frac}
\newcommand{\ind}[1]{\mathbf{1}_{\{ #1 \}}}

\makeatletter
\newcommand*\proc{{\mathpalette\bigcdot@{.7}}}
\newcommand*\bigcdot@[2]{\mathbin{\vcenter{\hbox{\scalebox{#2}{$\m@th#1\bullet$}}}}}
\makeatother
\DeclareMathOperator{\Poi}{Poi}

\newtheorem{thm}{Theorem}
\newtheorem{lemma}[thm]{Lemma}
\newtheorem{prop}[thm]{Proposition}
\newtheorem{cor}[thm]{Corollary}

\theoremstyle{remark}
\newtheorem{remark}[thm]{Remark}
\theoremstyle{definition}

\newcommand{\spm}{survives$^{+,-}$ }
\renewcommand{\sp}{survives$^{+}$ }

\author[Dygert]{Brittany Dygert}
%\address{Department of Atmospheric Sciences, University of Washington}
%\email{bdygert@atmos.washington.edu}

\author[Kinzel]{Christoph Kinzel}
%\address{Department of Mathematics, University of Oklahoma}
%\email{christoph.c.kinzel-1@ou.edu}

\author[Zhu]{Jennifer Zhu}
%\address{Department of Mathematics, University of California Berkeley}
%\email{jzhu42@berkeley.edu}

\author[Junge]{Matthew Junge}
%\address{Department of Mathematics, Duke University}
%\email{jungem@math.duke.edu}

\thanks{\noindent The first three authors were undergraduates participating in the Summer 2016 University of Washington REU supervised by Matthew Junge.}

%\thanks{The first author was partially supported by NSF grant DMS-1401479.}

\author[Raymond]{Annie Raymond}
%\address{Department of Mathematics, University of Washington}
%\email{raymonda@uw.edu}

\author[Slivken]{Erik Slivken}
%\address{LPSM, University of Paris Diderot}
%\email{eslivken@lpsm.paris}

\begin{document}

\title{The bullet problem with discrete speeds}

\maketitle 

\begin{abstract} 
Bullets are fired, one per second, with independent speeds sampled uniformly from a discrete set. Collisions result in mutual annihilation. 
%The bullet problem is to determine if the first bullet survives with positive probability when speeds are uniformly sampled from $(0,1)$. It is conjectured that a fast enough first bullet does, while a slow bullet does not.
%
We show that a second fastest bullet survives with positive probability, while a slowest bullet does not. 
%\HOX{We can probably cut this sentence if these results are really the focus of this paper} 
This also holds for exponential spacings between firing times, and for certain non-uniform measures that place less probability on the second fastest bullet. Our results provide new insights into a two-sided version of the bullet process known to physicists as ballistic annihilation.
  
%Each has an independent speed sampled according to a measure, $\mu$, on the set $S=S' \cup\{s_2,s_1\}$, with $S'$ a bounded positive set, and $x < s_2 <s_1$ for all $x \in S'$. When two or more bullets collide they mutually annihilate.
% We prove that so long as $\mu(S') + 2 \mu(\{s_2\}) \geq 1$ the second fastest bullet survives with positive probability. 
 %So long as there is a second fastest bullet, this result also holds for some non-uniform and continuous speed distributions. %and when the bullets are fired at random times. 
%
%When the speeds are discrete and symmetric, we prove that the slowest bullet perishes almost surely. This theorem supports the conjectured phase transition in survival of the first bullet when the speeds are instead uniform$(0,1)$ random variables.

%The example to keep in mind is when the bullets have speeds sampled uniformly from $\{1,2,3\}$. Our theorem the first bullet has speed two it survives with positive probability, whereas it perishes almost surely if it has speed one. 
\end{abstract}

%\tableofcontents

\section{Introduction}
% The version studied here is defined as follows. 

%The bullet process is among the simplest annihilating particle systems. These have been around since the 1970's, but very little progress has been made.
%The combination of simplicity and difficulty make the bullet problem compelling and, according to many, rather addictive. 
%Part of its allure is that i
The bullet process is a deceptively simple process for which we presently lack the tools to completely analyze. Each second, a bullet is fired from the origin along the positive real line with a speed uniformly sampled from $(0,1)$. When a faster bullet collides with a slower one, they mutually annihilate. 
%Collisions result in mutual annihilation. 
%Is there a firing speed 
%$s_c$
%at which the 
The \emph{bullet problem} is to show there exists $s_c>0$ such that if the first bullet has speed faster than $s_c$ it survives with positive probability, and if it has speed slower than $s_c$ it is almost surely annihilated. It is conjectured that $s_c \approx 0.9$. 
%The bullet problem is interesting, because, despite its simplicity, very little is rigorously known.  
In this work, we prove an analogous transition occurs when speeds are instead sampled uniformly from a discrete set. Additionally, our results have applications to physics model  ballistic annihilation \cite{b5, b4, b8, b7, b9, arrows}.

 Consider bullets $b_1,b_2,\hdots$ fired from the origin along the real line such that $b_i$ is fired at time $i$ for all $i\geq 1$.  A deterministic delay between firings is convenient for our argument, but not needed. All of the results here hold for exponentially distributed firing times (see \thref{rem:spacings}). 
The speed of bullet $b_i$ is denoted by $s(b_i)$. The bullets have independent and identically distributed (i.i.d.) speeds sampled according to a probability measure $\mu$ on a set of speeds $S \subseteq (0,\infty)$. 
% We allow infinite speed bullets. Such bullets instantaneously annihilate the nearest bullet. 
When two or more bullets collide, all of them are annihilated. We will refer to this as an \emph{$(S,\mu)$-bullet process.} To clean up our notation, we will write the probability of an atom as $\mu(s)$ rather than as $\mu(\{s\})$.
% for atoms in $S$. 
%When $S$ is a finite set with maximal elements $s_2<s_1$ and minimal element $s_n$ we will occasionally write $p_i = \mu(s_i)$. 
%It will be convenient to state some of our results with the density function $f$ given by $\mu(A) = \int_A f(x) dx$. Call this an \emph{$(S,f)$-bullet process}.
%However, rather than uniform$(0,1)$ speeds, these have speeds in the set 
\begin{comment}
$S = S' \cup \{s_1,s_2\}$ 
where $S'\neq \emptyset$ is any bounded subset of $\mathbb R^+$, and $x<s_2<s_1$ for all $x \in S'$. So, $s_1$ is the \emph{fastest} speed and $s_2$ is the \emph{second fastest}. Let $\mu$ be any probability measure on $S$. The first bullet deterministically has the second fastest speed, and all subsequent bullets have independent speeds sampled according to $\mu$. Expressed in our notation, we have $s(b_1) =s_2$ is the speed of $b_1$. Each subsequent bullet has speed $s(b_i) \in S$ with $s(b_i) \sim \mu$. In this variant it is possible that multiple bullets collide simultaneously. When this occurs \emph{all} bullets involved are annihilated. 
%When two or more bullets collide, they mutually annihilate. 
\end{comment}

Let $b_i \mapsto b_j$ denote the event of bullet $b_i$ and $b_j$ colliding with $b_i$ faster, thus resulting in their mutual annihilation. We say that $b_i$ \emph{catches} $b_j$. Note that this can only happen if $i>j$ and $s(b_i)>s(b_j)$. %Since we are interested in of the survival time, we instead study the index of the annihilating bullet. 
%
% \begin{align}\tau = \begin{cases} j, & \exists j \colon b_j \mapsto b_1 \text{ when $s(b_1) =s_2$}\\ \infty, & \text{otherwise} \end{cases}.  \label{eq:tau}\end{align} This is the index of the bullet which annihilates $b_1$ when the $b_1$ has the second fastest speed. 
%Similarly, define $\kappa$ to be the index of the bullet which annihilates $b_1$ when $s(b_1) = s_n$ has the slowest speed.
 Define $\tilde \tau$ to be the minimum index with $b_{\tilde \tau} \mapsto b_1$. The minimum is to account for the possibility of a simultaneous collision of several bullets. If $b_1$ is never caught by another bullet, set $\tilde  \tau =\infty$. When $\tilde \tau=\infty$, we say that $b_1$ \emph{survives}. When $\tilde \tau<\infty$, we say that $b_1$ \emph{perishes}. 
Our main result is that, when the bullet speeds are uniformly sampled from a finite set, a second fastest bullet survives with positive probability, while the slowest bullet does not.

 %discrete bullet speeds set of possible speeds there is a phase transition in survival of the first bullet.
 
\begin{thm} \thlabel{thm:uniform}
Fix $n \geq 3$ and $0<s_n<\cdots < s_2 < s_1 < \infty$.  Let $\mu$ be the uniform measure on $S= \{s_n,\hdots, s_1\}$. In an $(S,\mu)$-bullet process it holds that
\begin{enumerate}[(i)]
	\item The second fastest bullet survives with positive probability: $$\P[b_1 \text{ survives} \mid s(b_1) = s_2] >0.$$
	\item The slowest bullet perishes almost surely: 
			$$\P[b_1 \text{ survives} \mid s(b_1) = s_n] =0.$$
\end{enumerate}
\end{thm}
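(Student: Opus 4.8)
The plan is to handle the two parts separately, and within each part to compare the discrete-speed bullet process with a more tractable process whose survival probability can be computed or bounded directly.

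For part (ii), the slowest bullet, the idea is that the slowest bullet $b_1$ is caught unless *every* bullet fired after it is eventually annihilated by something that is not $b_1$ — equivalently, unless the bullet $b_1$ is shielded forever. I would first observe that a slowest bullet is caught by the first bullet to its right that is never itself caught from the left. So $b_1$ survives iff among $b_2,b_3,\dots$ every bullet is eventually annihilated before it can reach $b_1$; since $b_1$ has the slowest speed, any bullet that survives among $\{b_2,b_3,\dots\}$ will have a larger speed and hence will catch $b_1$. Thus $b_1$ survives iff the entire configuration $b_2,b_3,\dots$ mutually annihilates (no bullet survives to the left region), which on the half-line is a recurrence-type event. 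The cleanest route is a counting / renewal argument: partition time into blocks and show that each block independently has a constant probability of producing a bullet that escapes to catch $b_1$, so that almost surely some block does. Alternatively, one can use a comparison with a biased random walk: track the ``signed speed profile'' or the front of the rightmost surviving bullet and show it has positive drift away from the origin. I expect part (ii) to be the easier of the two.

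For part (i), the second fastest bullet, the main obstacle is that $b_1$ can only be annihilated by a fastest bullet ($s_1$), and each fastest bullet, to reach $b_1$, must not be blocked by anything in between. The plan is to build an explicit positive-probability event on which $b_1$ survives. The natural event to engineer is: the first several bullets $b_2,\dots,b_k$ are arranged (with the right speeds) so that every fastest bullet that is ever fired is annihilated by a suitable slower bullet before it can overtake $b_1$. A convenient device is to set up a ``shield'': arrange for $b_2$ to have speed $s_n$ (slowest), so it sits just ahead of $b_1$ and absorbs the first $s_1$-bullet; more robustly, one wants a renewal structure where, after each time the shield is consumed, it is rebuilt with positive probability before the next fastest bullet arrives. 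This is where I expect to invoke a comparison with a branching or percolation-type structure — likely the tools hinted at by the notation in the preamble (colored trees $\Thom$, first-passage / FM constructions, stochastic orders $\pgfprec$, $\stprec$) — to show that the ``shield survives forever'' event, or a coupling of it with a supercritical object, has positive probability. The crux is controlling the dependencies: whether a given fastest bullet is blocked depends on the whole history to its left, so one must find a regeneration structure (e.g. epochs at which the region near $b_1$ returns to a clean state) that makes the blocking events comparable to independent trials with summable failure probabilities, or a stochastic domination by a supercritical percolation.

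The hard part, concretely, will be making the regeneration/independence rigorous in part (i): the bullet process has long-range dependence (a collision far to the right can free up a fast bullet much later), so one cannot naively call blocking events independent. I would address this by defining a stopping-time sequence at which the configuration in a bounded window to the right of $b_1$ is ``reset'' (all bullets in the window annihilated among themselves, fresh bullets incoming), using the deterministic unit spacing to get explicit geometric tail bounds on the reset times, and then running a Borel–Cantelli / supermartingale argument on these epochs. The discreteness of $S$ and the lower bound $n \geq 3$ (so that there is at least one speed strictly between fastest and slowest available to build shields) should be exactly what makes this construction possible.
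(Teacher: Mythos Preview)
Your plan has genuine gaps in both parts, and in each case the paper's argument is structurally different from what you propose.

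\textbf{Part (ii).} You call this the easier half and sketch a block/renewal argument: partition $\{b_2,b_3,\dots\}$ into blocks and claim each block ``independently has a constant probability of producing a bullet that escapes to catch $b_1$.'' But the blocks are not independent: a fast bullet produced in block $k$ can be intercepted by a slow bullet left over from block $k-1$, and conversely a slow bullet surviving out of block $k$ can absorb the threat from block $k+1$. This is exactly the long-range dependence you flag for part (i), and it is just as serious here. Your alternative (``track a signed speed profile and show positive drift'') is too vague to evaluate; no such one-dimensional statistic with an obvious martingale structure is identified. The paper does \emph{not} attack (ii) directly at all. Instead it proves (i) first, then argues by contradiction in the \emph{two-sided} process: if a slowest bullet survived$^+$ with positive probability, so would a second-slowest bullet by monotonicity; but a second-slowest bullet is second-fastest from the perspective of bullets fired before it, so by (i) it also survives$^-$ with positive probability. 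Ergodicity of the two-sided process (Birkhoff) then forces two distinct speeds to have positive density of survivors, which is impossible since they would collide. So (ii) is deduced from (i) via ergodicity, not by any direct renewal estimate.

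\textbf{Part (i).} Your plan is to engineer an explicit positive-probability event (a perpetually-rebuilt shield) and make regeneration rigorous via stopping times and Borel--Cantelli with summable failure probabilities. The difficulty is that to get summability you need the failure probability at the $k$th regeneration to decay, and nothing in your construction produces decay---each regeneration epoch looks like a fresh i.i.d.\ start, so the failure probability is a fixed constant, and Borel--Cantelli goes the wrong way. The paper's mechanism is different and does not build an explicit survival event. It derives a \emph{recursive distributional inequality} for $\tau$, the index that catches $b_1$: conditioning on $s(b_2)$, if $s(b_2)=s_1$ then $\tau=2$; if $s(b_2)=s_2$ then $b_2$ must first be caught by a speed-$s_1$ bullet (one copy of $\tau$) and then $b_1$ must be caught (a second, independent copy), so $\tau\succeq\tau_1+\tau_2$; if $s(b_2)<s_2$ then $b_2$ is a shield and, on a small but positive-probability event $F$, again yields two copies of $\tau$, and otherwise at least one. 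The resulting inequality $\tau\succeq \mathcal A\tau$ has a fixed point which is exactly the first-passage time to $0$ of a lazy random walk started at $1$ with left/right step probabilities $\mu(s_1)$ and $\mu(s_2)+\epsilon\,\mu(S\setminus\{s_1,s_2\})$. With $\mu$ uniform the walk has rightward bias, so the fixed point is infinite with positive probability, hence so is $\tau$. The doubling when $s(b_2)=s_2$ is the idea you are missing; your shield picture corresponds only to the $s(b_2)<s_2$ case and by itself gives no bias.
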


The survival of $b_1$ when it has maximal speed is straighforward. No bullet can catch it. This is not the case with the second fastest bullet. There will a.s.\ be infinitely many faster bullets trailing it. So, its survival hinges on interference of slower bullets.

%The second fastest bullet survives for more general sets of speeds. What is important is that the second slowest bullet has a much likelihood as the fastest.

\thref{thm:uniform} solves the discrete analogue of the bullet problem. The coupling between two $(S,\mu)$-bullet processes with bullet speeds $(s(b_i))$ and $(s(b'_i))$ in which $s(b_1) > s(b'_1)$ and $s(b_i)=s(b'_i)$ for $i \geq 2$ has $b_1$ surviving for every realization in which $b_1'$ survives. This guarantees that, when $S$ and $\mu$ are fixed, the probability the first bullet survives is non-decreasing with respect to its speed. %Formally, if $(S,\mu)$ is fixed and $s<s'$ with both $s,s' \in S$, then $\P[b_1 \text{ survives} \mid s(b_1) = s] \leq \P[b_1 \text{ survives} \mid s(b_1) = s'] $. 
This monotonicity combined with \thref{thm:uniform} implies that there is a speed at which an initial bullet with that speed will perish, while one with faster speed will survive with positive probability. An interesting further question, that relates back to the original bullet problem, is to locate where the phase transition occurs when $S= \{i/n\colon i=1,\hdots,n\}$ and $\mu$ is uniform.

Observing a phase transition for survival of the second fastest particle as $\mu$ places less mass on it interests physicists and mathematicians who study ballistic annihilation. By adapting the proof of \thref{thm:uniform}, we take a step towards addressing this question. 

\begin{thm} \thlabel{cor:less}
Let $S$ be as in \thref{thm:uniform}. There exists a probability measure $\mu$ supported on $S$ such that $\mu(s_2)<\mu(s_1)$ and 
$$\P[b_1 \text{ survives} \mid s(b_1) =s_2] >0$$
in an $(S,\mu)$-bullet process.
\end{thm}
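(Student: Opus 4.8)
The plan is to take the uniform-measure proof of \thref{thm:uniform}(i) and perturb it. That proof must show that a second-fastest bullet $b_1$ survives with positive probability by identifying a positive-probability ``good'' configuration of the first several bullets in which: (a) every fast bullet (speed $s_1$) that appears early is annihilated by some configuration of slower bullets before it can reach $b_1$, and (b) after this initial protective block, the remaining bullet stream — which is again an i.i.d.\ $(S,\mu)$-stream — fails to produce any bullet that catches $b_1$. The key structural fact is that survival of $b_1$ is a \emph{tail-type} event determined by finitely-describable annihilation patterns, and the probability of each relevant pattern is a polynomial in the atom weights $\mu(s_1),\dots,\mu(s_n)$. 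Since \thref{thm:uniform}(i) gives that this survival probability is strictly positive at the uniform point $\mu(s_i)=1/n$, and the survival probability is a (limit of) continuous functions of the weights, I would argue it remains strictly positive on a neighborhood of the uniform point within the probability simplex. In particular, one can choose a point of that neighborhood with $\mu(s_2)<\mu(s_1)$.

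First I would make precise the continuity/lower-semicontinuity statement. Let $p(\mu) := \P_\mu[b_1 \text{ survives} \mid s(b_1)=s_2]$. For each $m$, the event ``$b_1$ is not caught by any of $b_2,\dots,b_m$'' depends on only finitely many speeds, so its probability $p_m(\mu)$ is a polynomial in $(\mu(s_1),\dots,\mu(s_n))$, hence continuous; and $p_m(\mu) \downarrow p(\mu)$ as $m\to\infty$. So $p$ is an infimum of continuous functions, i.e.\ upper semicontinuous — which is the wrong direction. To get a genuine lower bound I instead want to extract from the proof of \thref{thm:uniform}(i) an \emph{explicit} finite-horizon good event $A$: a finite prefix of bullet types (and collision structure) such that, conditioned on the prefix realizing $A$, the i.i.d.\ continuation almost surely never catches $b_1$. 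Then $p(\mu) \geq \P_\mu[A] \cdot q(\mu)$, where $q(\mu)$ is the (conditional) probability the continuation never catches $b_1$; here $\P_\mu[A]$ is a polynomial in the weights and is positive at the uniform point. The content of \thref{thm:uniform}(i) is exactly that $q>0$ at the uniform $\mu$; the remaining task is to show $q$ stays positive — or at least that the whole product stays positive — under a small perturbation of the weights.

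The main obstacle is controlling $q(\mu)$, the probability that the i.i.d.\ continuation never produces a bullet catching $b_1$, since this is the genuinely infinite part of the argument and the uniform-measure proof likely exploits the symmetry $\mu(s_i)=1/n$ in an essential way (e.g.\ a clean recursive or renewal structure, or a comparison with a branching/embedded process whose subcriticality uses equal weights). To handle this I would look at \emph{how} the uniform proof establishes $q>0$ and check that the argument is robust: typically such arguments bound $q$ below by the survival probability of an auxiliary subcritical branching-type process whose offspring law depends continuously on $\mu$, so that small perturbations keep it subcritical and keep the survival probability positive; alternatively the proof may give a recursive inequality for a generating-function-type quantity whose relevant fixed point varies continuously in $\mu$. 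Either way, the heart of the matter is to isolate the one or two quantitative inequalities in the proof of \thref{thm:uniform}(i) that ``have slack'' at the uniform point and observe that slack persists on a neighborhood; then combining $\P_\mu[A]>0$ near uniform with $q(\mu)>0$ near uniform yields $p(\mu)>0$, and any such $\mu$ with $\mu(s_2)<\mu(s_1)$ — obtained by, say, shifting a small amount of mass from $s_2$ to $s_1$ — proves the claim. I would also remark that the same perturbation argument, applied to \thref{rem:spacings}, gives the exponential-spacings version.
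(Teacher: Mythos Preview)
Your strategy is essentially the paper's: locate the quantitative inequality in the proof of \thref{thm:uniform}(i) that holds \emph{strictly} at the uniform point, and then perturb $\mu$ so that the inequality still holds while $\mu(s_2)<\mu(s_1)$. Concretely, the paper's proof of \thref{thm:uniform}(i) goes through \thref{prop:rde} and \thref{prop:bootstrap}, which together show survival whenever
\[
p_1 \;<\; p_2 + \epsilon\,(1-p_1-p_2),\qquad \epsilon = p_1^{\,m_0-1} p_n^{\,m_0} p_2^{\,h(m_0)},
\]
with $m_0,h(m_0)$ deterministic constants depending only on $S$. At the uniform point this reads $n^{-1} < n^{-1} + n^{-(2m_0+h(m_0))}(n-2)$, which has genuine slack; the paper then exhibits an explicit $\mu_\delta$ (shifting $\delta$ mass from $s_2$ to $s_n$, so $p_1=n^{-1}$ stays fixed while $p_2=n^{-1}-\delta$) for which the inequality persists. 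Your ``auxiliary subcritical process whose parameters vary continuously in $\mu$'' is exactly the biased lazy random walk of \thref{prop:bootstrap}, and your ``recursive inequality whose fixed point varies continuously'' is exactly the RDE $\tau\succeq\mathcal A\tau$.

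One caution: your intermediate decomposition $p(\mu)\ge \P_\mu[A]\cdot q(\mu)$, with $A$ a finite prefix after which ``the i.i.d.\ continuation almost surely never catches $b_1$'', is circular as stated---$q(\mu)$ is precisely the survival probability you are trying to show is positive, so no finite prefix alone can force it. The argument only closes once you pass to the recursive/random-walk comparison, which you do correctly identify in your final paragraph. Also, shifting mass from $s_2$ to $s_1$ (your suggestion) works too, but makes the inequality tighter on both sides; the paper's choice of shifting to $s_n$ keeps $p_1$ fixed and is slightly cleaner.
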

We next explain how our results apply to ballistic annihilation.

\subsection{Applications to ballistic annihilation}

If time and space are interchanged the bullet process is a one-sided version of \emph{ballistic annihilation.} This model received considerable attention from physicists in the 1990's. % (see \cite{b3,b4,b11,b8}).
There are very precise conjectures that still lack satisfactory justification. 
%We discuss some in more detail after stating our results. 
The probability measure on speeds in ballistic annihilation is typically assumed to be symmetric, but not necessarily uniform. Sidoravicius and Tournier establish survival in ballistic annihilation for such measures \cite{arrows}. A corollary of our main theorem is survival of the second fastest particle for asymmetric three-element sets with the uniform measure. This is proven for one-sided ballistic annihilation in the discussion following \cite[Proposition 4.1]{arrows}. However, our main theorem allows us to extend to the usual two-sided setting. Also, our secondary result provides an upper bound for where the conjectured phase transition occurs in the canonical symmetric three-speed ballistic annihilation. 
%We also provide a class of continuous speed distributions for which the first bullet perishes with probability at least one half.  We learned after completing this project that Vladas Sidoravicius and Laurent Tournier can prove a similar result in \cite{arrows}.

Ballistic annihilation is a physics model that was introduced to try to isolate intriguing features observed in more complicated systems, such as irreversible aggregation \cite{b2}. Particles are placed on the real line according to a unit intensity Poisson point process.  Each particle is assigned a speed from a measure $\nu$ on $\mathbb R$. Particles move at their assigned speed and mutually annihilate upon colliding.

 Although it appears to have arisen independently, the bullet problem is equivalent to one-sided ballistic annihilation on $[0,\infty)$. If one considers the graphical representation of bullet locations, it is easy to see that inverting time and space coordinates makes the process into ballistic annihilation with inverted speeds (see Figure \ref{fig:b-a}).

 \begin{figure} 

\mbox{
\subfigure[Fire a bullet each second and plot its distance from the origin.]
{

\begin{tikzpicture}[scale = 1/3, rotate = 90]
\begin{scope}[yscale=-1,xscale=1]
	\draw[help lines, color=gray!30, dashed] (-4.9,-4.9) grid (4.9,4.9);
	\draw[ultra thick, ->] (-5,-5)--(5,-5) node[above]{$d$};
	\draw[->,ultra thick] (-5,-5)--(-5,5) node[right]{$t$};
	\draw (-5,-4) -- (5,0);
	\draw (-5,-3) -- (5,2);
	\draw (-5,-2) -- (5,2.2);
	\draw (-5,-1) -- (5,4);
	\draw (-5,0) -- (3,2);
	\draw (-5,1) -- (2,5);
	\draw (-5,2) -- (1,5);
	\draw (-5,3) -- (0,5);
\end{scope}
\end{tikzpicture}}
	} \qquad 
\subfigure[This is equivalent to ballistic annihilation with the inverse speeds.]
{
\begin{tikzpicture}[scale = 1/3]

	\draw[help lines, color=gray!30, dashed] (-4.9,-4.9) grid (4.9,4.9);
	\draw[ultra thick, ->] (-5,-5)--(5,-5) node[right = .2 cm]{$t$};
	\draw[->,ultra thick] (-5,-5)--(-5,5) node[above]{$d$};
	\draw (-5,-4) -- (5,0);
	\draw (-5,-3) -- (5,2);
	\draw (-5,-2) -- (5,2.2);
	\draw (-5,-1) -- (5,4);
	\draw (-5,0) -- (3,2);
	\draw (-5,1) -- (2,5);
	\draw (-5,2) -- (1,5);
	\draw (-5,3) -- (0,5);
\end{tikzpicture}
}
\caption{The bullet process is equivalent to one-sided ballistic annihilation.}\label{fig:b-a}
\end{figure}
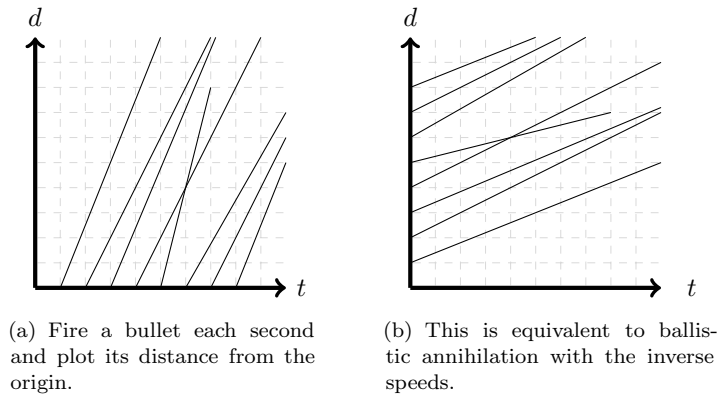
 In Section \ref{sec:slow} we describe how to make the bullet process two-sided, so that it is equivalent to the usual ballistic annihilation. 
 %Our results can thus be applied  to ballistic annihilation.

Ballistic annihilation is conjectured to exhibit more interesting behavior when $\nu$ is atomic \cite{b2}. The canonical example is when $\nu$ is a symmetric measure on $\{-1,0,1\}$:
\begin{align}
 \nu = \f{1-p}{2} \delta_{-1} + p \delta_0 +\f{1-p}{2} \delta_{1}\label{eq:nu},
\end{align}
and $p$ is the probability a particle has speed-0. Symmetry and ergodicity ensure that no speed $\pm1$ particles can survive. However, it is not so clear what happens with speed-0 particles. By analyzing a complicated differential equation, Krapivsky et  al.\  infer that a speed-0 particle survives if and only if $p>.25$ \cite{b8}. Providing a probabilistic proof of this remains an important question. Currently, there is no proof that a speed-0 particle perishes almost surely for any $p$.

An application of \thref{thm:uniform} (i) is that ballistic annihilation with the uniform measure on any three speeds from $\mathbb R$ has the middle speed surviving with positive probability. Typically the measure in ballistic annihilation is assumed to be symmetric about 0 (as in \cite{arrows}).  Our result implies that the second fastest particle survives with positive probability for asymmetric speeds. 

\begin{cor} \thlabel{cor:3speed}
Let $- \infty < r_3 < r_2 < r_1 < \infty$ and $\nu$ be the uniform measure on $\{r_3,r_2,r_1\}$. For ballistic annihilation with either unit or exponential spacings, a particle with speed-$r_2$ will survive with positive probability. 
\end{cor}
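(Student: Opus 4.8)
The plan is to derive \thref{cor:3speed} from \thref{thm:uniform}(i) with $n=3$, using the dictionary relating the bullet process to ballistic annihilation (Figure \ref{fig:b-a}). The first move is purely kinematic: ballistic annihilation is invariant under passing to a uniformly translating frame, i.e.\ under replacing every speed $r$ by $r+v_0$ for a fixed $v_0$, since the collision structure is then unchanged. Combining this with the inversion of Figure \ref{fig:b-a}, which replaces a speed $s$ by $1/s$, and the fact that every triple of positive reals equals $\{1/s_3,1/s_2,1/s_1\}$ for suitable $0<s_3<s_2<s_1$, \thref{thm:uniform}(i) and \thref{rem:spacings} yield the following one-sided statement (essentially the content of the discussion following \cite[Proposition 4.1]{arrows}): in one-sided ballistic annihilation on a half-line $[x_0,\infty)$ with iid speeds uniform on an arbitrary three-element subset of $\RR$, with unit or exponential spacings, and with an extra particle placed at the endpoint $x_0$ carrying the \emph{middle} of the three speeds, that endpoint particle is never hit with positive probability.

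Next I would reduce the two-sided problem to two independent copies of this one-sided one. Pick the tagged particle and pass to its rest frame, so the speed set becomes $\{-a,0,b\}$ with $a,b>0$ and the tagged particle sits at the origin with speed $0$; by translation invariance of the driving point process, the particles in $(0,\infty)$ and those in $(-\infty,0)$ are independent, each an iid-speed configuration of the required kind. In this frame only speed-$(-a)$ particles can reach the origin from the right, and only speed-$b$ particles from the left. Let $A$ be the event that, in the ballistic annihilation run on $(0,\infty)$ alone (ignoring both the tagged particle and the left half), some speed-$(-a)$ trajectory reaches $0$, and let $B$ be the analogous event for speed-$b$ particles on $(-\infty,0)$; then $A$ and $B$ are independent. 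A first-passage argument gives that the tagged particle survives precisely on the event $A^c\cap B^c$: if no particle ever reaches $0$ the tagged particle is clearly safe, while if $A$ occurs then, up to the first time that any particle reaches $0$, the left and right halves have not yet interacted, so the offending particle reaches $0$ in the full process as well and annihilates the tagged particle. Finally, $A^c$ becomes, after boosting by a constant to make all speeds positive and then inverting, exactly the event that $b_1$ survives in an $(S,\mu)$-bullet process with $\mu$ uniform on a three-point set and $s(b_1)$ the second-fastest speed, while $B^c$ becomes the same event after additionally applying the reflection $x\mapsto-x$, $v\mapsto-v$, which keeps the tagged speed in the middle. Hence $\P[A^c]>0$ and $\P[B^c]>0$ by the one-sided statement, so $\P[\text{tagged particle survives}]=\P[A^c]\,\P[B^c]>0$.

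The routine ingredients are the Galilean boosts, the reflection, and keeping track of which of the three speeds plays the role of ``second fastest'' through the inversion $s\mapsto1/s$ and the reflection. The step that genuinely needs care is the identity expressing survival as $A^c\cap B^c$, together with the independence of $A$ and $B$: one must check that the possible leakage of particles across the origin after the tagged particle would have been destroyed cannot influence whether $A$ or $B$ occurs — which is precisely why $A$ and $B$ are defined via the \emph{one-sided} dynamics rather than via the full process. Everything else is bookkeeping on top of \thref{thm:uniform}(i) and \thref{rem:spacings}.
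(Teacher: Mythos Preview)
Your argument is correct and follows essentially the same route as the paper: reduce two-sided survival of the middle-speed particle to two independent one-sided survival events, and apply \thref{thm:uniform}(i) (together with \thref{rem:spacings}) to each side after the appropriate Galilean boost, reflection, and space--time inversion. The paper packages the decoupling step as \thref{lem:both_sides} in the two-sided bullet process, whereas you spell it out directly in the ballistic-annihilation picture via the first-passage argument at the origin; this is the same content, and your version makes the justification of \thref{lem:both_sides} more explicit.
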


As a corollary to \thref{cor:less} we consider ballistic annihilation with $\nu$ from \eqref{eq:nu} and give concrete bounds for when a speed-0 particle survives in the process with either unit or exponential(1) spacings.

\begin{cor} \thlabel{cor:ba}
In a $\nu$-ballistic annihilation with $\nu$ from \eqref{eq:nu} and particles started at each site of $\mathbb Z$, a speed-0 particle survives with positive probability for $p \geq .3325$. If the spacings are according to a unit intensity Poisson point process, then $p \geq .3313$ suffices. 
\end{cor}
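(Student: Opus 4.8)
The plan is to reduce the two-sided, Poisson-placed ballistic annihilation with $\nu$ from \eqref{eq:nu} to the one-sided $(S,\mu)$-bullet process treated in \thref{cor:less}, and then optimize the numerical input. First I would record the correspondence: under $\nu$ a speed-$0$ particle plays the role of the second-fastest bullet $b_1$ with $s(b_1)=s_2$, the speed-$(+1)$ particles to its right become the faster bullets $b_2,b_3,\dots$ fired behind it, and the speed-$(-1)$ particles are an independent copy of the same picture on the other side. A speed-$0$ particle survives in the two-sided process if and only if it is not annihilated from the right \emph{and} not annihilated from the left; by symmetry and independence of the two sides, $\P[\text{speed-}0\text{ survives}] \geq \P[\text{speed-}0 \text{ survives from the right}]^2$, so it suffices to make the one-sided survival probability positive. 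Restricting to the particles on the right of a tagged speed-$0$ particle, their positions (given there are infinitely many, which happens a.s.) form, after the usual time/space inversion of Figure \ref{fig:b-a}, a bullet process with i.i.d.\ firing gaps and each bullet independently having the ``fast'' speed $s_1$ with probability $\tfrac{1-p}{2} / (1 - \tfrac{1-p}{2}) = \tfrac{1-p}{1+p}$ and the ``slow'' speed $s_2$ (i.e.\ speed relative to the tagged particle) otherwise. So we are exactly in the setting of an $(S,\mu)$-bullet process on a two-element effective speed set (plus the tagged slow bullet) with $\mu(s_1) = \tfrac{1-p}{1+p}$.

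Next I would invoke \thref{cor:less}: its proof produces an explicit threshold $q^*$ such that whenever the probability mass on the fast speed is at most $q^*$, the tagged slow bullet survives with positive probability. Setting $\tfrac{1-p}{1+p} \leq q^*$ and solving for $p$ gives $p \geq \tfrac{1-q^*}{1+q^*}$; plugging in the value of $q^*$ extracted from the proof of \thref{cor:less} yields the stated bound $p \geq .3325$. For the Poisson-spacing statement, I would instead use the exponential-spacings version of \thref{cor:less} guaranteed by \thref{rem:spacings}: conditioning on the tagged particle, the gaps between consecutive same-side particles are i.i.d.\ exponentials, which is precisely the exponential-firing-time model, and a (slightly better) threshold $q^*_{\exp}$ in that regime gives $p \geq .3313$. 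The only real content beyond bookkeeping is making sure the numerical constants coming out of the proof of \thref{cor:less} are sharp enough to clear the claimed decimals, and that the two-sided reduction loses nothing: squaring the one-sided probability keeps it positive, so no loss occurs at the level of the qualitative statement, and the threshold $p$ is driven entirely by the one-sided estimate.

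The main obstacle I anticipate is purely quantitative: \thref{cor:less} is stated only as an existence result ($\mu(s_2)<\mu(s_1)$ with positive survival), so to get the explicit $.3325$ and $.3313$ one must reopen its proof, track the constants in whatever domination/coupling argument is used (the $h_{C,c}$-type bounds suggested by the macros in the preamble), and verify the inequality $\tfrac{1-p}{1+p} \le q^*$ holds at the claimed $p$. A secondary point to check carefully is that conditioning the Poisson process on a particle at the origin and on its speed being $0$ leaves the configuration of other particles a Poisson process (Palm calculus / Slivnyak), so that the ``right side'' really is an i.i.d.\ exponential-gap stream of $\pm1$ particles with the correct marginal $\tfrac{1-p}{2}$ each — after which only the faster-than-tagged ones ($+1$'s, with conditional probability $\tfrac{1-p}{1+p}$) matter for catching it.
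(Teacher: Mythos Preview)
Your reduction to a two-element ``effective'' speed set is the step that breaks the argument, and no bookkeeping can recover the stated thresholds from it. The paper does \emph{not} thin out one class of particles; it keeps all three speeds and passes to the $(S,\mu)$-bullet process with $S=\{1,\tfrac32,3\}$, $\mu(\tfrac32)=p$, $\mu(1)=\mu(3)=\tfrac{1-p}{2}$, and $b_1$ tagged with speed $\tfrac32$. The relevant input is then the explicit inequality \eqref{eq:threshold}, namely $p_1<p_2+\epsilon(1-p_1-p_2)$ with $p_1=(1-p)/2$, $p_2=p$, and $\epsilon=\P[F]$ the probability of a concrete finite configuration that uses the \emph{slowest} speed: for unit spacings $(s(b_2),\dots,s(b_6))=(1,1,3,\tfrac32,\tfrac32)$ gives $\epsilon=\bigl(\tfrac{1-p}{2}\bigr)^3 p^2$, and for exponential spacings the event $F$ of \eqref{eq:F2} gives $\epsilon=\tfrac14\,p^2\,\tfrac{1-p}{2}$. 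Plugging these in and solving numerically yields $.3325$ and $.3313$.

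Your two-speed model discards exactly the mechanism that pushes the bound below $1/3$. With only a fast speed and the tagged speed, the recursion in \thref{prop:rde} loses the $\ind{s(b_2)<s_2}$ branch entirely (the event $F$ lives inside $\{s(b_2)=s_n\}$ with $s_n<s_2$, which does not exist in a two-speed model), and the survival criterion collapses to the unbiased random-walk comparison $\mu(s_1)<\tfrac12$. With your $\mu(s_1)=\tfrac{1-p}{1+p}$ that reads $p>\tfrac13\approx .3333$, strictly worse than the claimed $.3325$ and $.3313$. So ``reopening the proof of \thref{cor:less} and tracking constants'' cannot help: that proof needs a third, strictly slower speed to manufacture $\epsilon>0$. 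Two secondary issues: your identification of which particles are the threats is reversed (on the right of a speed-$0$ particle it is the speed-$(-1)$ particles that approach it, not the speed-$(+1)$ ones; equivalently, after the time--space swap the fastest bullets correspond to the most negative ballistic speed), and simply deleting the ``moving-away'' particles is not innocuous---those particles annihilate incoming threats, so removing them alters the collision pattern in a way you have not justified as a one-sided bound.
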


Note that \cite{arrows} establishes a better bound $p \geq .3280$ (with exponential spacings). We include \thref{cor:ba} to illustrate the proof of \thref{cor:less}, and because it lays a foundation that can be further optimized. The latter is pursued in a followup work with Burdinsky, Gupta, and  Junge in \cite{BA}.

\subsection{History} \label{sec:discussion}
%The history of the bullet problem is muddled. It seems that many good people have heard the problem, worked on it, realized how challenging it is, then moved on. 

%Though the bullet process is a special case of ballistic annihilation, it appears to have arisen independently. %At the time of writing, there are no published papers on the bullet problem.  This dearth of results is not due to lack of interest, as many researchers have spent time on it, but rather to the difficulty to prove anything. 
%The problem has been shared widely, but mostly via word of mouth. Here is a brief overview of the history and current state of the problem.
 
%The origins of the bullet problem are a bit muddled. 
The IBM problem of the month from May in 2014 %[\href{https://www.research.ibm.com/haifa/ponderthis/challenges/May2014.html}{link}] 
credits a version of the problem to an engineer named David Wilson. The question there is to fire exactly $2m$ bullets with independent uniform$(0,1)$ speeds and compute the probability of the event $E_m = \{ \text{no bullets survive}\}$. There is an unpublished result of Fedor Nazarov that 
\begin{align}\P[E_m] = \prod_{i=1}^m 1-\f{1}{2i} = O(m^{-1/2}).\label{eq:none}	
\end{align}

Letting $E_{m,s}$ be the event $E_m$ conditioned on $s(b_1) = s$, it is conjectured that 
$$\P[E_{m,s}] = O( m^{-c_s}) \text{ with $c_s \to \infty$ as $s \to 1$. }$$
It is surprising that changing one bullet speed out of the $2n$ total bullets affects the exponent. One would naively expect it only changes $\P[E_m]$ by a constant factor. This conjecture comes from simulations performed by  Kostya Makarychev.% and Yuval Peres. 
If one could prove that $c_s >1$ for some value of $s$, then a Borel-Cantelli style argument would imply $b_1$ survives when it has speed at least $s$. Thus, understanding $\P[E_{m,s}]$ would lead to a solution to the bullet problem. Makarychev's simulations suggest that the critical value is approximately $0.9$.

The bullet process with $n$ bullets fired was recently studied by Broutin and Marckert \cite{bullets2}. They consider arbitrary non-atomic speed distributions on $[0,\infty)$ and find that the distribution $\mathbf q_n$ for the number of surviving bullets is invariant for several different spacings and acceleration functions for the bullets.
%	\begin{enumerate}[label = (\roman*)]
%		\item Unit delays between firings (the case we consider here).
%		\item Independent and identically distributed random delays between firings.
%		\item Fixed, but not necessarily identical, delays between firings.
%		\item Fixed delays and fixed acceleration functions for bullet speeds.	
%	\end{enumerate}
%The main result \cite[Theorem 1]{bullets2} establishes that all four processes have the same distribution $\mathbf q_n$ for the total number of bullets that survive as time goes to $\infty$.
 The distribution shows up in other contexts such as random permutations and random matrices.  It is characterized by the following recurrence relation:
$$q_0(0) =1, \quad q_1(1) = 1, \quad q_1(0) = 0,$$
and for $n \geq 2$ and any $0  \leq n$,
\begin{align}q_n(k) = \f 1 n q_{n-1} (k-1) + \Bigl( 1 - \f 1 n \Bigr) q_{n-2}(k) \label{eq:q}
\end{align}
with $q_n(-1) = q_n(k) = 0$ for $k > n$.

This formula generalizes \eqref{eq:none}, which describes $q_{2m}(0)$. The equation for $\mathbf q_n$ can be analyzed to prove a central limit theorem that says $\approx \log n$ bullets survive (see \cite[Proposition 2]{bullets2}). Unfortunately, this does not imply survival with infinitely many bullets. Although the number of surviving bullets is growing like $\log n$, we cannot rule out the possibility that the number of bullets alive at time $n$ in the process is 0 infinitely often. Indeed, there are instances of $\mathbf q_n$ for which this happens and others where it does not. 
These results suggest that it is equally challenging to analyze variants of the bullet problem. 
%See the introduction of \cite{bullets2} for more discussion. 

\subsection{Overview of proofs}
Let $\tau$ be distributed as $\tilde \tau$ conditioned on the event $\{s(b_1) = s_2\}$. Letting $\tau_1,\hdots, \tau_5$ be independent copies of $\tau$ we find an event $F \subseteq \{ s(b_2) < s_2\}$ with $\P[F]= \epsilon >0$ so that
\begin{align*}
\tau &\succeq  \ind{s(b_2)=s_1} + \ind{s(b_2)=s_2}(\tau_1+\tau_2)
%\\
%	 & \qquad \qquad\qquad \qquad\qquad \qquad 
+ \ind{s(b_2) < s_2}(\ind{F}(\tau_3 + \tau_4) +\ind{F^c}\tau_5).
\end{align*} 
The behavior this captures is that  if  $s(b_2) = s_1$ then $b_1$ is caught no matter what. However, if $s(b_2)=s_2$, then $b_1$ survives ``twice'' as long as it would have otherwise. If the second bullet is slower than $s_2$, then it acts as a shield for $b_1$---thus increasing the survival time of $b_1$.  These arguments hinge on the renewal properties described in \thref{lem:renew1} and \thref{lem:renew2}, and a fortuitous dependence that makes fast bullets less likely to appear behind the bullet that catches $b_2$ when $s(b_2) <s_2$. All of this is made rigorous in \thref{prop:rde}.

We prove \thref{thm:uniform} (ii) via contradiction. If the slowest bullet survives with positive probability, then monotonicity implies that the second slowest bullet also survives with positive probability. When we extend the bullet process to be two-sided, the two slowest speeds become the two fastest speeds from the perspective of bullets fired before them. \thref{thm:uniform} then implies that both speeds survive with positive probability in the two-sided process. Because the two-sided process is ergodic, the Birkhoff ergodic theorem gives a positive density of both speeds that survive. This is a contradiction since these surviving bullets with different speeds must eventually meet, and thus cannot survive.

\section{Survival of a second fastest bullet} \label{sec:proof}

Write $s_2<s_1$ for the two largest elements of $S$. Let $\tau$ to be the minimum index with $b_\tau \mapsto b_1$ in this process with $b_1$ deterministically set to have $s(b_1) = s_2$. The goal of this section is to prove that $\P[\tau = \infty] >0$. 

%First we relate $\tau$ to a formula involving independent copies of itself. We obtain these copies by unearthing a renewal property buried in the dynamics. We then show that $\tau$ dominates the fixed point of this recursive equation. The fixed point describes the return time to 0 of a biased random walk, which is infinite with positive probability. Thus, $\tau$ is infinite with positive probability.

\subsection{Obtaining a recursive inequality}
%The approach to \thref{thm:main}   follows the above blueprint.
 We start with two lemmas describing a  renewal property in the $(S,\mu)$-bullet process satisfying our hypotheses. The first states that the bullet speeds behind a maximal speed bullet are independent of any event involving this bullet.
\begin{lemma} \thlabel{lem:renew1}
 If $b_\gamma \mapsto b_j$ %\HOX{So far, $\tau$ has been used to catch $b_1$}
  and $s(b_\gamma) = s_1$ with $j< \gamma$ any fixed index, then the random variables $\gamma, s(b_{\gamma+1}),s(b_{\gamma+2}),\hdots$ are independent. 
\end{lemma}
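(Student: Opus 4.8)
The plan is to read the statement as a renewal (strong-Markov–type) assertion about the conditional law given the event $E := \{b_\gamma \mapsto b_j\} \cap \{s(b_\gamma) = s_1\}$, where $j$ is the fixed index and $\gamma = \min\{i : b_i \mapsto b_j\}$ is the (random) index of the bullet catching $b_j$. First I would decompose $E = \bigsqcup_{n > j} E_n$ with $E_n := \{b_n \mapsto b_j\} \cap \{s(b_n) = s_1\}$, noting that these events are disjoint: on $E_n$ the bullet $b_j$ must be alive up to the collision time of $b_n$ with $b_j$, so at most one value of $n$ can occur, and on $E_n$ one automatically has $\gamma = n$ since $b_j$ cannot be caught before it is caught. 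Thus $\{\gamma = n\} \cap E = E_n$.

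The crux is the structural claim that \emph{on the event $\{s(b_n) = s_1\}$, the event $\{b_n \mapsto b_j\}$ is measurable with respect to $\sigma(s(b_1), \dots, s(b_n))$.} Because firing times are the deterministic integers $1,2,\dots$ and $s_1$ is the maximal speed, a direct computation shows that for every $m > n$ the bullet $b_m$ is strictly behind $b_n$ in space at all times at which $b_n$ is alive and, having speed at most $s_1$, never overtakes it. Hence, as long as $b_n$ is alive, the bullets $b_{n+1}, b_{n+2}, \dots$ are confined to the region behind $b_n$ and can collide neither with $b_n$ nor with any bullet ahead of $b_n$; symmetrically, the survivors among $b_1, \dots, b_{n-1}$ that lie ahead of $b_n$ interact only with each other and with $b_n$. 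Consequently the trajectory of $b_n$, the time and location at which $b_n$ is annihilated (if ever), and in particular whether that annihilation is a collision with $b_j$, are deterministic functions of $(s(b_1), \dots, s(b_n))$ together with the firing times — the later speeds are irrelevant. (Simultaneous collisions cause no difficulty here: the argument only uses that no bullet fired after time $n$ ever enters the portion of the line strictly ahead of $b_n$ before $b_n$ dies.) Therefore each $E_n \in \sigma(s(b_1), \dots, s(b_n))$.

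Finally I would conclude by independence of the i.i.d.\ speeds. Fix $n$; since $E_n \in \sigma(s(b_1), \dots, s(b_n))$ and the speeds $s(b_1), s(b_2), \dots$ are i.i.d.\ $\mu$, the event $E_n$ is independent of $(s(b_{n+1}), s(b_{n+2}), \dots)$, which is itself an i.i.d.\ $\mu$ sequence. Hence for every $k \geq 1$ and Borel $A_1, \dots, A_k \subseteq S$,
\[
\P\bigl[E_n \cap \{s(b_{n+1}) \in A_1\} \cap \dots \cap \{s(b_{n+k}) \in A_k\}\bigr] = \P[E_n]\,\mu(A_1)\cdots\mu(A_k).
\]
Dividing by $\P[E] = \sum_{m>j}\P[E_m]$ and using $\{\gamma = n\}\cap E = E_n$ gives, under $\P[\,\cdot \mid E\,]$,
\[
\P\bigl[\gamma = n,\ s(b_{\gamma+1}) \in A_1, \dots, s(b_{\gamma+k}) \in A_k\bigr] = \P[\gamma = n \mid E]\,\mu(A_1)\cdots\mu(A_k),
\]
which exhibits $\gamma$ as independent of $(s(b_{\gamma+1}), s(b_{\gamma+2}), \dots)$ and the latter as an i.i.d.\ $\mu$ sequence; mutual independence of $\gamma, s(b_{\gamma+1}), s(b_{\gamma+2}), \dots$ follows. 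The step I expect to be the main obstacle is making the structural claim fully rigorous — cleanly justifying that a live maximal-speed bullet shields the entire region ahead of it from all later bullets, so that the collision history of $b_n$ depends only on the first $n$ speeds — and keeping the bookkeeping around the ``minimal catcher'' convention and simultaneous collisions tidy.
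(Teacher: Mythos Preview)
Your proposal is correct and follows essentially the same approach as the paper. The paper's proof is a two-sentence sketch whose content is precisely your structural claim---a maximal-speed bullet shields everything ahead of it, so for each $n$ the event $E_n$ lies in $\sigma(s(b_1),\dots,s(b_n))$---and you have simply fleshed this out with the explicit decomposition $E=\bigsqcup_n E_n$ and the standard independence calculation.
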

\begin{proof}
The bullet $b_\gamma$ has the fastest speed, so the bullets behind it do not interfere. Thus the event $\{b_\gamma \mapsto b_1\}$ depends only on the bullet speeds $s(b_1),s(b_2),\hdots,s(b_\gamma)$. 
\end{proof}
A longer range renewal property holds for other annihilations where, outside of a particular window, the bullet speeds become independent. % The example to keep in mind is when $b_\gamma \mapsto b_2$ with $s(b_2)<s_\gamma \leq s_2$. On this event, there is some window behind $b_\gamma$ where the speeds must be such that $b_\gamma$ reaches $s(b_2)$, however beyond this window the speeds are independent. 
\begin{lemma}\thlabel{lem:renew2}
Let $E =E(S,s(b_i),s(b_j),i,j)= \{b_i \mapsto b_j, s(b_i), s(b_j)\}$ be the event that $b_i$ catches $b_j$ with $s(b_i)$ and $s(b_j)$ known. There exists a nonnegative integer $a = a(s(b_i),s(b_j),i,j)$ such that, conditional on $E$, the bullet speeds $s(b_{i+a}), s(b_{i+a+1}),\hdots$ are independent of one another and have distribution $\mu$. 
\end{lemma}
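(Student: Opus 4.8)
The plan is to exploit that the annihilation dynamics is a \emph{deterministic} function of the speed sequence, together with the obvious fact that a bullet which has not yet been fired can influence nothing. First I would dispose of the trivial case: if $j\ge i$ or $s(b_i)\le s(b_j)$, then $b_i$ cannot catch $b_j$, so $E=\emptyset$ and $a=0$ works vacuously. Assume now $j<i$ and $s(b_i)>s(b_j)$. In the absence of every other bullet, $b_i$ and $b_j$ travel along the rays $t\mapsto s(b_i)(t-i)$ and $t\mapsto s(b_j)(t-j)$, which meet at the single time
$$T^* = T^*(i,j,s(b_i),s(b_j)) = \frac{i\,s(b_i)-j\,s(b_j)}{s(b_i)-s(b_j)},$$
and a one-line computation gives $T^*-i=\tfrac{(i-j)\,s(b_j)}{s(b_i)-s(b_j)}>0$, so $T^*>i$, while the common location $x^*=s(b_i)(T^*-i)$ is strictly positive.

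The core step is a locality observation: the configuration of the process at every time $t<T^*$ is a deterministic function of $s(b_1),\dots,s(b_{\lceil T^*\rceil-1})$ only. Indeed, a bullet $b_k$ with $k\ge\lceil T^*\rceil$ has $k\ge T^*>t$, so it has not been launched by time $t$ and is absent from the dynamics on $[0,T^*)$; and a bullet launched exactly at time $T^*$ (possible only if $T^*\in\mathbb{Z}$) sits at the origin at time $T^*$ and so cannot participate in a collision at the positive site $x^*$. On the event $E$ the speeds $s(b_i)$ and $s(b_j)$ are frozen to the values defining $E$, so $T^*$ and $x^*$ become fixed numbers; moreover, if $b_i$ and $b_j$ annihilate \emph{each other}, this happens precisely at time $T^*$, because until that instant neither has met anything and both still follow their free rays, which intersect only at $T^*$. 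Thus the event ``$b_i$ and $b_j$ both reach $x^*$ still alive at time $T^*$'' is determined by the dynamics on $[0,T^*)$, hence by $s(b_1),\dots,s(b_{\lceil T^*\rceil-1})$; since $j<i\le\lceil T^*\rceil-1$, the speeds $s(b_i)$ and $s(b_j)$ are themselves among these, so the whole event $E$ lies in $\sigma\bigl(s(b_1),\dots,s(b_{\lceil T^*\rceil-1})\bigr)$. I would then set $a:=\lceil T^*\rceil-i$, a nonnegative integer (in fact $\ge 1$) depending only on $i,j,s(b_i),s(b_j)$; taking $\lceil T^*\rceil-i$ rather than $\lceil T^*\rceil-1-i$ is what places $s(b_{i+a})$ itself \emph{outside} the list of determining speeds.

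To conclude, $E\in\sigma\bigl(s(b_1),\dots,s(b_{i+a-1})\bigr)$, which is independent of $\sigma\bigl(s(b_{i+a}),s(b_{i+a+1}),\dots\bigr)$ because the $s(b_\ell)$ are i.i.d.; therefore conditioning on $E$ leaves $s(b_{i+a}),s(b_{i+a+1}),\dots$ a sequence of independent $\mu$-distributed random variables, which is the claim. The only genuine obstacle is making the locality step airtight --- that the evolution on $[0,T^*)$ truly ignores not-yet-fired bullets, with the attendant care over simultaneous multi-bullet collisions at $T^*$ and the integer-versus-non-integer cases for $T^*$ --- but this is elementary, since every bullet moves forward from the origin, so nothing launched at or after time $T^*$ can reach the positive collision site by time $T^*$. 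Pleasantly, the argument uses neither finiteness of $S$ nor any upper bound on the speeds.
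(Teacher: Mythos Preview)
Your proof is correct, and it takes a somewhat different route from the paper's. The paper chooses $a$ by asking for the last firing time at which a bullet of \emph{maximal} speed $s_1$ could still intercept $b_i$ before the collision; this yields the explicit formula at \eqref{eq:a} and depends on the top speed $s_1$. You instead use only the collision time $T^*$ itself and the trivial observation that no bullet fired at or after $T^*$ can be present at the positive location $x^*$ by time $T^*$, so the event $E$ is already measurable with respect to $s(b_1),\dots,s(b_{\lceil T^*\rceil-1})$.

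Your argument is cleaner and, as you note, needs no upper bound on the speeds; it also matches the stated dependence $a=a(s(b_i),s(b_j),i,j)$ more literally, since your $a=\lceil T^*\rceil-i$ does not involve $s_1$. The trade-off is that the paper's $a$ is strictly smaller (by roughly $x_0/s_1$), and this tighter window is what the paper exploits later: the explicit formula \eqref{eq:a} feeds into the function $h(m)$ in the proof of \thref{prop:rde} and into the numerical bounds in \thref{cor:ba}. Your larger $a$ would still give $h(m)\to\infty$ and hence the qualitative results, but would produce worse constants in \eqref{eq:PF} and the ballistic-annihilation thresholds. For the lemma as stated, though, either choice of $a$ is perfectly adequate.
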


\begin{proof}
Given $i$, $j$, $s(b_i)$, and $s(b_j)$, let $a$ be such that a maximal speed bullet fired at time $i+a$ cannot reach $b_i$ before $b_i \mapsto b_j$. This is the latest time at which $b_i$ could be prevented from catching $b_j$. The event $b_i \mapsto b_j$ is thus unaffected by the bullet speeds $s(b_{i+a}),s(b_{i+a+1}),\hdots$. The independence claim follows.  

Because we will need it later, we write down an explicit formula for $a$. A collision between $b_i$ and $b_j$ would occur at time $t_0$ and location $x_0$ given by $$t_0 = \f{ s(b_i)i - s(b_j)j}{s(b_i) - s(b_j)}, \qquad x_0 = s(b_j)(t_0 -1).$$  The last firing time $k$ at which a bullet with speed $s_1$ could prevent this is 
\begin{align}
\max_{k \in \mathbb Z} \{ s_1(t_0 - k) > x_0\} = \max _{k\in \mathbb Z}\left \{ i\leq k < \frac{s_1 - s(b_j)}{s_1} t_0 + \f{s(b_j)}{s_1} \right\}\label{eq:a}.
\end{align}
We then set $a$ equal to \eqref{eq:a}$-i$. 
\end{proof}

We will occasionally refer to the interval $[j+1,a]$ as the \emph{window of dependence of $E$}. This is because, as described more precisely above in \thref{lem:renew2}, the bullet speeds in this interval are influenced by $E$, while those beyond it are again i.i.d.\ 

Recall that one of the several equivalent forms of stochastic dominance $X \succeq Y$ is that there is a coupling with marginals $X'\sim X$ and $Y'\sim Y$ such that $X' \geq Y'$ almost surely. We let $\ind{\cdot}$ denote an indicator function.

\begin{prop} \thlabel{prop:rde} %Let $\tau$ be the index of the bullet that catches $b_1$ conditional on $s(b_1) = s_2$. 
%Suppose $\mu(s_i) = p_i >0$ for $i =1,2,3$.
 At least one of the following holds: 

	\begin{itemize}
		\item $\tau$ is infinite with positive probability.
		\item Let $\tau_1,\hdots,\tau_5$ be i.i.d.\  copies of $\tau$. There exists an event $F \subseteq \{ s(b_2) < s_2\}$ independent of the $\tau_i$  with $\P[F] = \epsilon = \epsilon(S) >0$ so that
\begin{align}
\tau &\succeq  \ind{s(b_2)=s_1} \label{eq:3}	 \\
	& \qquad  + \ind{s(b_2)=s_2}(\tau_1+\tau_2)\label{eq:2} \\
	 & \qquad \qquad + \ind{s(b_2) < s_2}(\ind{F}(\tau_3 + \tau_4) +\ind{F^c}\tau_5)\label{eq:1}.
\end{align} 

	\end{itemize}
%\HOX{Don't love the notation with the lines... I get its use, but it's not great visually}
\end{prop}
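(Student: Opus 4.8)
The plan is to assume $\tau$ is almost surely finite and build, on that event, the stochastic domination in \eqref{eq:3}--\eqref{eq:1} by conditioning on $s(b_2)$ and tracking how the second bullet interacts with $b_1$. First I would dispose of the trivial branch: if $s(b_2) = s_1$, then by \thref{lem:renew1} the fastest bullet $b_2$ is never interfered with, so it catches $b_1$ deterministically (recall $s(b_1) = s_2$ and $b_2$ is fired one unit of time after $b_1$), giving $\tau \geq 1 = \ind{s(b_2)=s_1}$ on this event and contributing the term \eqref{eq:3}.

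Next I would handle the branch $s(b_2) = s_2$. Here $b_1$ and $b_2$ are two parallel bullets of the same speed, so $b_2$ never catches $b_1$; rather, $\tau$ is the time at which some later bullet $b_\gamma$ reaches $b_1$, and in order to do so $b_\gamma$ must first annihilate $b_2$ (or pass through the point where $b_2$ was already annihilated). The idea is a renewal/restart argument: let $b_\gamma$ be the first bullet to catch $b_2$. By \thref{lem:renew2}, conditionally on the event $\{b_\gamma \mapsto b_2\}$ with speeds known, the bullet speeds fired after the window of dependence are again i.i.d.\ $\mu$, and by \thref{lem:renew1} applied appropriately the geometry ahead of $b_\gamma$ looks like a fresh copy of the original configuration with a second-fastest lead bullet. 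So the time for $b_1$ to be caught decomposes as (time for the first "layer" to be cleared) plus (an independent copy of $\tau$), and iterating once more across the second layer yields $\tau \succeq \tau_1 + \tau_2$ on $\{s(b_2) = s_2\}$, which is \eqref{eq:2}. The bookkeeping here—lining up the windows of dependence so the pieces are genuinely independent copies of $\tau$ and not merely dominated by them—is where I expect to spend the most care.

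The branch $s(b_2) < s_2$ is the subtle one and the real heart of the proposition. Now $b_2$ is strictly slower than $b_1$, so $b_2$ trails $b_1$ and acts as a \emph{shield}: any bullet that is to catch $b_1$ must first get past $b_2$. If $b_2$ is itself caught by some bullet $b_\gamma$, then whether $b_1$ is now exposed depends on $s(b_\gamma)$ and, more importantly, on what speeds appear in the window of dependence of the event $\{b_\gamma \mapsto b_2\}$. The event $F$ should be chosen to capture a favorable configuration in that window — specifically, that no maximal-speed ($s_1$) bullet is fired in the short window of dependence behind $b_\gamma$, or more generally that the bullet which clears $b_2$ is itself slow enough (and unsupported) that it in turn becomes a shield for $b_1$. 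The key probabilistic observation, flagged in the overview as a "fortuitous dependence," is that conditioning on $b_\gamma$ catching the slower bullet $b_2$ actually biases the speeds just behind $b_\gamma$ to be \emph{smaller} (a fast bullet there would have caught $b_\gamma$ first), so $F$ has probability bounded below by some $\eps = \eps(S) > 0$ not depending on anything else, and is independent of the i.i.d.\ copies $\tau_3, \tau_4, \tau_5$ (which live strictly beyond the window of dependence). On $F$, $b_1$ picks up a fresh shield and the catch time is $\succeq \tau_3 + \tau_4$; on $F^c$ we keep only one renewal and get $\succeq \tau_5$. This produces \eqref{eq:1}.

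Finally I would assemble the three branches: since $\{s(b_2) = s_1\}$, $\{s(b_2) = s_2\}$, $\{s(b_2) < s_2\}$ partition the sample space and the three contributions are supported on disjoint events, the sum on the right-hand side of \eqref{eq:3}--\eqref{eq:1} is realized branch-by-branch, and in each branch I will have exhibited a coupling with $\tau$ dominating it. One technical point to make explicit is that the $\tau_i$ arising as renewed copies in different branches, and the event $F$, must be realized on disjoint portions of the randomness (the i.i.d.\ speed sequence beyond the relevant windows of dependence), so that declaring them independent and identically distributed as claimed is legitimate; \thref{lem:renew1} and \thref{lem:renew2} are exactly what licenses this. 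The main obstacle, I expect, is not any single estimate but the careful definition of $F$ and the verification that the renewed sub-processes are honest i.i.d.\ copies of $\tau$ rather than merely stochastically larger — i.e., getting the couplings in \eqref{eq:2} and \eqref{eq:1} exactly right rather than settling for an inequality in the wrong direction.
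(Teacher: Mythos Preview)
Your overall architecture matches the paper's: partition on $s(b_2)$, dispose of $s_1$ trivially, use a renewal to get $\tau_1+\tau_2$ when $s(b_2)=s_2$, and engineer an event $F$ on the slow branch. Two points are off, one minor and one substantive.

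In the branch $s(b_2)=s_2$ you invoke \thref{lem:renew2} and speak of a window of dependence behind the bullet $b_\sigma$ that catches $b_2$. The observation you are missing is that $b_\sigma$ \emph{must} have speed $s_1$, since nothing slower can catch a speed-$s_2$ bullet. Hence \thref{lem:renew1} applies with no window at all, $\sigma-1$ is exactly distributed as $\tau_1$, and the bullets after $\sigma$ are immediately i.i.d., giving a clean $\tau_2$ for the catch of $b_1$. Without this the bookkeeping you flag as delicate is genuinely so.

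The substantive gap is your description of $F$. You propose that $F$ be ``no maximal-speed bullet is fired in the window behind $b_\gamma$,'' or that ``the bullet which clears $b_2$ \dots becomes a shield for $b_1$.'' The second is impossible: $b_\gamma$ and $b_2$ mutually annihilate, so $b_\gamma$ shields nothing. The first misplaces the fortuitous-dependence observation. That no speed-$s_1$ bullet \emph{survives} in the window behind $b_\gamma$ is automatic (any such bullet would have caught $b_\gamma$ first, contradicting $b_\gamma\mapsto b_2$), and it is precisely this fact---iterated through the cascade of windows each surviving shield generates---that yields the single copy $\tau_5$ on \emph{all} of $\{s(b_2)<s_2\}$, hence in particular on $F^c$. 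It is not what singles out $F$. To get the \emph{extra} copy of $\tau$, the window must contain a bullet of speed exactly $s_2$: that bullet is then a fresh second-fastest leader behind $b_1$, and the $s(b_2)=s_2$ argument applied to it produces the second $\tau$. The paper builds $F=A\cap B$ explicitly: $A$ forces $s(b_2)=s_n$ followed by an alternating slowest/fastest block of length $2m_0-2$ and then $s(b_{2m_0+1})=s_2$, so that $\gamma=2m_0+1$ with $s(b_\gamma)=s_2$; $B$ then forces every bullet in the window behind $b_\gamma$ to have speed $s_2$. There is also a wrinkle you do not anticipate: one must verify the window behind $b_\gamma$ has room for at least one bullet, and the alternating block in $A$ (pushing $\gamma$ out to $2m_0+1$ with $m_0$ chosen so $h(m_0)>1$) is there precisely to ensure this.
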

\begin{proof}
We will establish each line of the above by conditioning on the value of $s(b_2)$. When $s(b_2) =s_1$ as in \eqref{eq:3}, we have $b_2 \mapsto b_1$ deterministically. Although $\tau=2$ on this event, it will simplify our calculations later to use the indicator function as a lower bound.

% has the second fastest speed, so the bullet that destroys it will be the first unobstructed speed-3 bullet. 
 
 When $s(b_2)=s_2$ as in \eqref{eq:2},  suppose that $b_{\sigma}$ destroys $b_2$. We have translated the original setup by one index, so $\sigma \sim \tau_1+1$. Only a bullet with the fastest speed can catch $b_2$, thus $s(b_\sigma)= s_1$. \thref{lem:renew1} ensures that the speeds  $s(b_{\sigma+1}),s(b_{\sigma+2}),\hdots$ are independent of $\sigma$. Suppose that $b_{\sigma'}\mapsto b_1$. Once again this is the first unobstructed speed-$s_1$ bullet after $b_\sigma$. Thus $\sigma'-\sigma \sim \tau_2-1$, and this difference is independent of $\sigma$. Summing $(\sigma' - \sigma) + \sigma$ we obtain the term $\tau_1 + \tau_2$ in \eqref{eq:2} (see Figure \ref{fig:x2}).

The pivotal case is \eqref{eq:1}, when $s(b_2) < s_2$. The idea is that $b_2$ acts as a shield, and causes an $\epsilon$-bias for the bullets close behind it to have speed-$s_2$. The reasoning in \eqref{eq:2} then ensures that $b_1$ will survive twice as long on this $\epsilon$-likely event. 
%
%First we describe the $\epsilon$-likely event. Let 
%$$A = \{ s(b_\gamma) = s_2\} \cap \{|I| >1 \}
%
To see this rigorously, suppose that $b_\gamma$ is the earliest bullet catching $b_2$. If $\gamma$ is infinite with positive probability, then so is $\tau$. Indeed, $b_1$ cannot be caught until $b_2$ is destroyed. In this case the first condition of the proposition is met and we are done.

Now, let us suppose that $\gamma$ is a.s.\ finite. We will start by describing the $\epsilon$-likely event $F$ for which we obtain an extra copy of $\tau$. When $b_2$ is caught, there is a finite window of dependence behind the catching bullet (see \thref{lem:renew2}). With positive probability this window contains only bullets with speed-$s_2$. 

A minor nuisance is showing that there is enough room in the window behind $b_\gamma$ for a speed-$s_2$ bullet. We start by restricting to the event that $s(b_2) = s_n$ and show that $\P[\gamma>M] >0$ for all $M>0$. 
%
%We start by showing that
%	\begin{align}\P[\gamma = 2m+1, s(b_\gamma) = s_2, s(b_2) = s_n] >0 \label{eq:gamma}
%		\end{align}
% for all $m\geq 2$. 
Let $m \geq 2$. With positive probability, there are alternating fastest and slowest bullets from index $3$ up to $2m$, and then a speed-$s_2$ bullet. Call this event $$A = \{s(b_2) = s_n, s(b_3) = s_n, s(b_4) = s_1,\hdots, s(b_{2m-1}) = s_n, s(b_{2m}) = s_1, s(b_{2m+1}) = s_2\}.$$
%Since bullet speeds are sampled uniformly, it holds that $\P[A ] = n^{-2m}$.
On the event $A$, we have $\gamma = 2m+1$ and $s(b_\gamma) = s_2$ so long as nothing catches $b_\gamma$ before it reaches $b_2$. We track the size of the window of dependence behind $b_\gamma$ with the function 
$$h(m) = a( s_2, s_n, 2m+1, 2), \qquad  m \geq 2.$$ 
Here $a(s_2,s_n, 2m+1,2) \geq 1$ is as in \thref{lem:renew2}.
It is the index distance behind $2m+1$ at which bullets resume being i.i.d.\ conditioned on the event $\{b_{2m+1} \mapsto b_2, s(b_{2m+1}) = s_2, s(b_2) = s_n\}$. We remark that, because we are fixing the indices and speeds in $a$, the function $h$ is deterministic.

Plugging our conditions into the explicit formula at \eqref{eq:a}, we have $t_0 \to \infty$ as $m \to \infty$, and also $\alpha = s_2/s_1 <1$. Thus, $h(m)$ is non-decreasing with $\lim_{m \to \infty} h(m) = \infty$. Let $m_0 = \min \{ m \geq 2 \colon h(m) >1\}$. As bullet speeds are between $s_n$ and $s_1$, we must have $m_0 < \infty$ and thus $1<h(m_0)< \infty.$ Let $B$ be the event that all of the bullets in this window have speed-$s_2$. Formally,
$$B = \{ s(b_{2m_0+1 +i}) = s_2 \text{ for all } i=1,\hdots, h(m_0)-1\}.$$
Let $F = A \cap B$. This event specifies the speeds of $2m_0 + h(m_0) -1$ bullets, and by independence we have
\begin{align}
\mathbf P [ F] = p_1^{m_0-1}  p_n^{m_0} p_2^{h(m_0)} >0,\label{eq:PF}
\end{align}
where $p_i = \mu(s_i)$.

Conditioned on $F$, all of $b_2,\hdots, b_{2m+1}$ mutually annihilate. Moreover, $s(b_{2m+1+i}) = s_2$  for $i=1,\hdots, h(m_0) -1$. The trailing bullets speeds $(s(b_{2m +1 +I}))_{I \geq h(m_0)}$ are i.i.d.\ uniform. The reasoning that yields the additional copy of $\tau$ in \eqref{eq:2}  then gives $h(m_0) -1 \geq 1$ additional copies of $\tau$ when $F$ occurs. We take only one of them and set $\epsilon = \P[F]$ as in \eqref{eq:PF}. This accounts for the term $\ind{F}(\tau_3 + \tau_4)$ in \eqref{eq:1}.

%We first describe the $\epsilon$-likely event that extends the survival of $b_1$ by an additional copy of $\tau$. 
Now that we have constructed the $\epsilon$-likely event to have $b_1$ survive for at least two copies of $\tau$. It remains to show that $b_1$ survives for at least a $\tau$-distributed amount of time on the event $\{s(b_2) < s_2\} \cap F^c$. This will give the term $\ind{F^c} \tau_5$ in \eqref{eq:1}.

Let $a=a(s(b_\gamma), s(b_2),\gamma,2)$ be the largest index for which $b_{\gamma+ a}$ could catch $b_\gamma$ before $b_\gamma$ catches $b_2$. %More precisely, $a$ is the largest index such that, if $s(b_{\gamma+a})=s_1$, then the time at which $b_{\gamma +a}$ could potentially catch $b_\gamma$ (if uninterrupted) would be earlier than the time of collision of $b_\gamma$ and $b_2$. 
%\HOX{i.e., let $a$ be the largest index such that, if $s(b_{\gamma+a})=s_1$, then the time at which $b_{\gamma +a}$ could potentially catch $b_\gamma$ (if uninterrupted) was to be earlier than the time of collision of $b_\gamma$ and $b_2$}.  %Necessarily $s_{f(\gamma)}$ would be 3 in this case.
Bullets with indices in the set $I = \{\gamma+1,\hdots, \gamma + a\}$ are dependent upon $s(b_\gamma),s(b_2)$, and $\gamma$. In particular, bullets faster than $s(b_\gamma)$ can survive to intercept $b_\gamma$. By \thref{lem:renew2}, the bullets with indices larger than $\gamma + a$ are once again independent (see Figure \ref{fig:S'}).

%\HOX{I'm not satisfied with the rigor here, both for obtaining the $\epsilon$ and the coupling for when we are in the $1-S_\epsilon$ situation. I am not sure how much more detail is appropriate though. -MJ}

In order for $b_\gamma \mapsto b_2$ to occur, all of the bullets $b_3,\hdots,b_{\gamma-1}$ must mutually annihilate. We can then ignore them for the remainder of the argument.
When $s(b_\gamma)=s_1$, it resets the model just as in the $s(b_2)=s_2$ case, and $b_1$ survives until a bullet with index distributed as $\tau+\gamma$ destroys it. 
The process has i.i.d.\ bullet speeds for indices after $\gamma + a$. Let us restrict our attention to just the bullets with indices in $I$. That is, consider a bullet model with only $|I|$ bullets, with speeds conditioned so that $b_\gamma \mapsto b_2$ with $s(b_2)<s_2$. Since $b_\gamma \mapsto b_2$, no  bullets with speed $s_1$ in $I$ can survive. Otherwise such a bullet would catch $b_\gamma$ before $b_\gamma$ catches $b_2$.  
These slower bullets only prolong the survival of $b_1$. 

 Returning to the bullet process with infinitely many bullets, before $b_1$ is destroyed all of the surviving bullets in $I$ must be destroyed by bullets with indices at least $\gamma+a$. Upon being destroyed, each of the surviving bullets from $I$ generates its own window of dependence that contains no surviving speed-$s_1$ bullets. Either these windows keep spawning new windows, in which case $b_1$ is never destroyed, or all of the bullets in these windows of dependence are destroyed. In the first case we have $\tau$ is infinite with positive probability. In the second, we have $b_1$ is again trailed by bullets with i.i.d.\ uniform speeds. Once this occurs, it takes a $\tau$ distributed number of bullets to catch $b_1$. %See Figure \ref{fig:S'}.
\end{proof}

\begin{remark} \thlabel{rem:spacings}
The same recursive inequality as in \thref{prop:rde} holds for exponential spacings. Let $(\zeta_i)$ be i.i.d.\ unit exponential random variables and consider an $(S,\mu)$-bullet process where we fire $b_1$ at time $t_1 = \zeta_1$, and $b_i$ at time $t_i = t_{i-1} + \zeta_i$ for $i \geq 2$.  As before, let $\tau$ be the random index of the first bullet to catch $b_1$ conditional on $s(b_1) =2$. We claim that $\tau$ still satisfies \thref{prop:rde}, but with a different event $F \subseteq \{s (b_2) < s_2\}$. 

As before if $s(b_2) = s_1$ then $\tau =2$. So, \eqref{eq:3} still holds. Next, if $s(b_2)=s_2$, then $b_1$ survives twice as long in the same sense as \eqref{eq:2}. This is because a bullet with speed $s_1$ must catch $b_2$, and the bullets trailing it have independent speeds and firing times that keep the exponential spacings just as in \thref{lem:renew1}. 

Lastly, if $s(b_2)<s_2$ we let $\gamma$ be the index $b_\gamma \mapsto b_2$. The construction is simpler than before. Just as in \thref{lem:renew2} the event $b_\gamma \mapsto b_2$ induces a finite window of dependence $t_\gamma+a$. Let $N$ be the number of bullets fired in the window of dependence. We take 
\begin{align}F = \{N=1, s(b_2) = s_n, s(b_3)=s_2, s(b_4)=s_2\}\label{eq:F2}
\end{align}
\noindent to be the event that $b_2$ is caught by $b_3$ when it has speed-$s_2$ The conditions $N=1, s(b_4)=s_2$ ensure that there is one speed-$s_2$ bullet in the window of dependence and no others. It is important that the spacings have the memoryless property, otherwise the times bullets are fired after $t_\gamma+a$ would not have the same distribution as at the start of the process.

We will see in the next section that satisfying the recursive distributional inequality in \thref{prop:rde} is sufficient to deduce a nonnegative random variable places some mass at $\infty$. So, our results extend to exponential spacings.
\end{remark}

  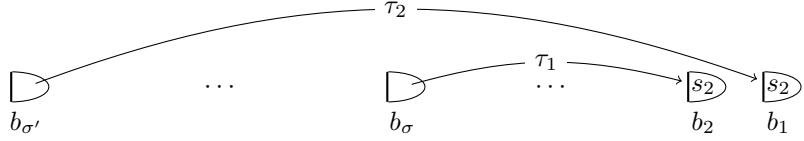
\begin{figure} 		
	\begin{center}
        \begin{tikzpicture}
        
%        \draw[dashed, ->] (-6,-.75) --(6,-.75);

%b1		

\begin{scope}[shift={(5 ,0)}]
		\draw (0,0) ellipse (.5cm and .2cm);
        \draw[fill = white, white, thick] (-.5,-.2) rectangle (0, .2 cm);
        \draw[thick] (0,.2) -- (0,-.2);
        \node (b1) at (.2,0) {$s_2$};
        \node[below=.2cm] at (b1) {$b_{1}$};
\end{scope}

%b2
\begin{scope}[shift={(4 ,0)}]
		\draw (0,0) ellipse (.5cm and .2cm);
        \draw[fill = white, white, thick] (-.5,-.2) rectangle (0, .2 cm);
        \draw[thick] (0,.2) -- (0,-.2);
        \node (b2) at (.2,0) {$s_2$};
        \node[below=.2cm] at (b2) {$b_{2}$};
\end{scope}

%b_\sigma
\begin{scope}[shift={(0 ,0)}]
		\draw (0,0) ellipse (.5cm and .2cm);
        \draw[fill = white, white, thick] (-.5,-.2) rectangle (0, .2 cm);
        \draw[thick] (0,.2) -- (0,-.2);
        \node (bs) at (.2,0) {};
        \node[below=.2cm] at (bs) {$b_{\sigma}$};
\end{scope}

%b_\sigma'
\begin{scope}[shift={(-5 ,0)}]
		\draw (0,0) ellipse (.5cm and .2cm);
        \draw[fill = white, white, thick] (-.5,-.2) rectangle (0, .2 cm);
        \draw[thick] (0,.2) -- (0,-.2);
        \node (bs') at (.2,0) {};
        \node[below=.2cm] at (bs') {$b_{\sigma'}$};
\end{scope}

\path (bs) edge[bend left=15, ->] node[midway,fill=white]{$\tau_1$}  (b2) ;
\node at (2.2,0) {$\cdots$};
\path (bs') edge[bend left=20, ->] node[midway,fill=white]{$\tau_2$}  (b1) ;
\node at (-2.2,0) {$\cdots$};

%\draw[ultra thick] (-6,-1) -- (6,-1);
		   \end{tikzpicture}
		\end{center}

		  \caption{The picture when $s(b_2)=s_2$. The bullet $b_2$ is annihilated by, $b_\sigma$, a bullet that is fired a $\tau$-distributed number indices after it. The  bullets trailing $b_\sigma$ are i.i.d.\ and thus $b_1$ is caught by, $b_{\sigma'}$, a bullet another $\tau$-distributed indices behind $b_\sigma$.}
		\label{fig:x2}		  
\vspace{1 cm}
%\fbox{$s(b_2) \in S'$}
\end{figure}

%\HOX{$b_\sigma$ is not annihilated by a bullet, it is annihilated...}

\begin{figure} 		

	\begin{center}
        \begin{tikzpicture}
        
%        \draw[dashed, ->] (-6,-.75) --(6,-.75);

%b1	

\begin{scope}[shift={(6.75 ,0)}]
		\draw (0,0) ellipse (.5cm and .2cm);
        \draw[fill = white, white, thick] (-.5,-.2) rectangle (0, .2 cm);
        \draw[thick] (0,.2) -- (0,-.2);
        \node (b1) at (.2,0) {$s_2$};
        \node[below=.2cm] at (b1) {$b_{1}$};
\end{scope}

%b2
\begin{scope}[shift={(4 ,0)}]
		\draw (0,0) ellipse (.5cm and .2cm);
        \draw[fill = white, white, thick] (-.5,-.2) rectangle (0, .2 cm);
        \draw[thick] (0,.2) -- (0,-.2);
        \node (b2) at (.2,0) {$$};
        \node[below=.2cm] at (b2) {$b_{2}$};
\end{scope}

%b_\sigma
\begin{scope}[shift={(1.7 ,0)}]
		\draw (0,0) ellipse (.5cm and .2cm);
        \draw[fill = white, white, thick] (-.5,-.2) rectangle (0, .2 cm);
        \draw[thick] (0,.2) -- (0,-.2);
        \node (bs) at (.2,0) {};
        \node[below=.2cm] at (bs) {$b_{\gamma}$};
\end{scope}

%b_\sigma'
\begin{scope}[shift={(-3 ,0)}]
		\draw (0,0) ellipse (.5cm and .2cm);
        \draw[fill = white, white, thick] (-.5,-.2) rectangle (0, .2 cm);
        \draw[thick] (0,.2) -- (0,-.2);
        \node (bs') at (.2,0) {};
        \node[below=.2cm] at (bs') {$b_{\gamma+a+1}$};
\end{scope}

\path (bs) edge[bend left=25, ->] (b2) ;
\node at (3,0) {$\cdots$};
%\path (bs') edge[bend left=20, |->] node[midway,fill=white]{$\tau_5$}  (b1) ;
\node at (0-.4,0) {$|---I---\:|$};
\node[left=.3 cm] at (bs') {$\cdots$};
		   \end{tikzpicture}
		\end{center}  
		\label{eq:figS'}
	\caption{The picture when $s(b_2)< s_2$. If $b_\gamma \mapsto b_2$, then there is an interval of bullets behind it that contains no surviving $s_1$-speed bullets. With probability at least $\epsilon$ it contains only speed-$s_2$ bullets. Bullets $b_{\gamma +a +1}$ onward are i.i.d.}
	\label{fig:S'}
  \end{figure}
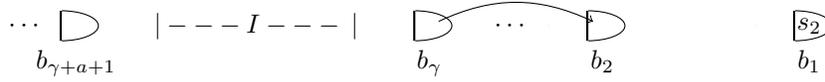

\subsection{Analyzing the recursive inequality} 
Our goal now is to show that any random variable satisfying the recursive distributional inequality in \thref{prop:rde} must be infinite with positive probability.  %\HOX{real term?}
 With $\epsilon$ as in \thref{prop:rde}, we introduce an operator $\mathcal A = \mathcal A (\mu)$ that acts on probability measures supported on the positive integers. It will be more convenient to represent such a measures by a random variable $T$ with that law. To define $\mathcal A$ we let $s \in S$ be sampled according to $\mu$, and $X_\epsilon\sim$Bernoulli($\epsilon$), both independent of one another. Take $T_1,\hdots,T_5$ to be i.i.d.\ copies of $T$ that are also independent of $X_\epsilon$ and $s$. We obtain a new distribution 
 $$\mathcal A T \overset{d} = \ind{s = s_1} + \ind{ s=s_2 }(T_1 + T_2)  + \ind{s<s_2}( X_\epsilon (T_3 + T_4)+(1- X_\epsilon) T_5 ) .$$
By \thref{prop:rde}, we have
\begin{align}\tau \succeq \mathcal A\tau \label{eq:st}.\end{align}
%This is in the usual sense of stochastic domination, which in our notation means $\P[\tau \geq a] \geq \mathcal \P[\mathcal A \tau \geq a]$ for all $a \geq 0$. 
%\HOX{Should $\tau$ have something to decorate it to say we're taking $s(b_2)=s_2$?}
The operator $\mathcal A$ is monotone.
%We show that the probability that $\P[\tau = \infty]>0$ in the following way. 
%\thref{lem:mono} makes the observation that $\mathcal A$ is monotone with respect to stochastic domination. Then, we show in \thref{lem:fp} that repeated iterations of $\mathcal A$ on $\tau$ converge to a fixed point, $\tau^* = \mathcal A \tau^*$. Combining this with  \eqref{eq:st} we obtain $\tau \succeq \tau^*$. This fixed point describes the return time to zero of a biased random walk, which is infinite with positive probability. As $\tau \succeq \tau^*$ we then have the survival time of $b_1$ conditioned on $s(b_1) = s_2$ is infinite with positive probability. 

%We go through this rigamarole with $\tau^*$ because, just as in the warm-up argument with the return time of a biased random walk, we seek an exact solution to the generating function relationship.
%This would not be the case if we had an inequality.  

\begin{lemma} \thlabel{lem:mono}
If $T \succeq T'$ then $\mathcal A T \succeq\mathcal A T	'.$
\end{lemma}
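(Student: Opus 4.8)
The plan is to exploit the fact that stochastic dominance is preserved under the operations used to build $\mathcal A T$ from $T$, namely taking i.i.d.\ copies, summing independent random variables, and mixing over independent randomness. Concretely, I would first couple the defining randomness so that $\mathcal A T$ and $\mathcal A T'$ are built from the \emph{same} draw of $s \sim \mu$ and the \emph{same} draw of $X_\epsilon \sim \Ber(\epsilon)$, while the copies $T_1,\dots,T_5$ and $T_1',\dots,T_5'$ are coupled pairwise so that $T_i \geq T_i'$ almost surely for each $i$ (this coupling exists by the hypothesis $T \succeq T'$ and the equivalent almost-sure-domination characterization of $\succeq$ recalled just before \thref{prop:rde}, applied independently across $i=1,\dots,5$, keeping the whole family independent of $(s,X_\epsilon)$).

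Next I would simply check that, under this coupling, the realized value of $\mathcal A T$ dominates that of $\mathcal A T'$ pointwise. On the event $\{s = s_1\}$ both equal $1$. On $\{s = s_2\}$ we get $T_1+T_2 \geq T_1'+T_2'$. On $\{s < s_2\}$, if $X_\epsilon = 1$ we get $T_3+T_4 \geq T_3'+T_4'$, and if $X_\epsilon = 0$ we get $T_5 \geq T_5'$. Since the three events $\{s=s_1\},\{s=s_2\},\{s<s_2\}$ partition the sample space (and the $X_\epsilon$ split refines the last one), in every case the coupled value of $\mathcal A T$ is at least that of $\mathcal A T'$. Hence we have produced a coupling of $\mathcal A T$ and $\mathcal A T'$ with the former $\geq$ the latter almost surely, which is exactly $\mathcal A T \succeq \mathcal A T'$.

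There is no real obstacle here; the only thing to be a little careful about is the bookkeeping of independence in the coupling — one must make sure that the single pair $(s,X_\epsilon)$ is shared, that the five coupled pairs $(T_i,T_i')$ are mutually independent and jointly independent of $(s,X_\epsilon)$, and that each marginal is correct, so that the constructed random variable genuinely has law $\mathcal A T$ (resp.\ $\mathcal A T'$). This is standard: build the coupled pairs $(T_i,T_i')$ on independent probability spaces and take a product with the space carrying $(s,X_\epsilon)$. Once the coupling is set up, the verification is the short case analysis above, so the lemma follows immediately.
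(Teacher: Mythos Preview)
Your proposal is correct and is exactly the paper's argument spelled out in full detail: the paper's one-line proof invokes ``the canonical coupling which sets each $T_i \geq T_i'$,'' and you have simply unpacked that coupling and the resulting case analysis.
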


\begin{proof}
This follows from the canonical coupling which sets each $T_i \geq T_i'$.
\end{proof}
\noindent Additionally, $\mathcal A$ has a unique fixed distribution. 
\begin{lemma} \thlabel{lem:fp}
Let $\tau$ be as in \thref{prop:rde}, and let $\mathcal A^n $ denote $n$ iterations 
of $\mathcal A$. It holds that $\mathcal A^n \tau \to  \tau^*$ with $\ts \overset{d}= \mathcal A \ts.$  	
\end{lemma}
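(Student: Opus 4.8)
The plan is to show that iterating $\mathcal A$ starting from $\tau$ produces a monotone sequence of distributions that converges, and that any limit must be a fixed point.

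\textbf{Step 1: Monotonicity of the iterates.} By \thref{prop:rde} (in the case where the first alternative fails), we have $\tau \succeq \mathcal A \tau$. Applying \thref{lem:mono} repeatedly gives $\tau \succeq \mathcal A \tau \succeq \mathcal A^2 \tau \succeq \cdots$, so $(\mathcal A^n \tau)_{n \geq 0}$ is a stochastically decreasing sequence of probability measures on the positive integers (together with the point at infinity). Writing $F_n(k) = \P[\mathcal A^n \tau \leq k]$ for each fixed $k$, monotonicity means $F_n(k)$ is non-decreasing in $n$ and bounded above by $1$, hence converges to some limit $F_\infty(k) \in [0,1]$. Since each $F_n$ is a non-decreasing function of $k$, so is $F_\infty$, and $F_\infty$ is right-continuous as an increasing limit of right-continuous step functions on $\Z$; thus $F_\infty$ is the CDF of a (possibly defective, i.e. placing mass at $\infty$) random variable, which we call $\ts$. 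This gives $\mathcal A^n \tau \to \ts$ in distribution, interpreting convergence on the one-point compactification $\{1,2,\dots\}\cup\{\infty\}$ so that no mass is lost.

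\textbf{Step 2: The limit is a fixed point.} It remains to check $\ts \overset{d}= \mathcal A \ts$. The operator $\mathcal A$ is built from finitely many i.i.d.\ copies $T_1,\dots,T_5$ combined with the independent random quantities $s\sim\mu$ and $X_\epsilon\sim\Ber(\epsilon)$ via sums and indicator masks. Since $\mathcal A^n\tau \to \ts$ weakly on $\{1,2,\dots\}\cup\{\infty\}$, the product measures converge: $(\,(\mathcal A^n\tau)_1,\dots,(\mathcal A^n\tau)_5, s, X_\epsilon) \to (T_1,\dots,T_5,s,X_\epsilon)$ in distribution, where now $T_i$ are i.i.d.\ copies of $\ts$. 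The map $(t_1,\dots,t_5,s,x)\mapsto \ind{s=s_1} + \ind{s=s_2}(t_1+t_2) + \ind{s<s_2}(x(t_3+t_4)+(1-x)t_5)$ is continuous on the compactified space (addition extends continuously to $[0,\infty]$, and the indicator coefficients depend only on the discrete coordinates $s,x$). Hence by the continuous mapping theorem, $\mathcal A^{n+1}\tau = \mathcal A(\mathcal A^n \tau) \to \mathcal A \ts$ in distribution. But the left side is also $\mathcal A^{n+1}\tau \to \ts$, so $\ts \overset{d}= \mathcal A\ts$ by uniqueness of the weak limit.

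\textbf{Uniqueness of the fixed distribution and the main obstacle.} The lemma as stated only asserts existence of a fixed distribution reached as this limit, so strictly we need only Steps 1--2; if uniqueness among all fixed distributions is wanted, one argues that any $\mathcal A$-fixed $T$ with $T \preceq \tau$ (which holds automatically, since $T = \mathcal A T = \mathcal A^n T$ and one can compare to $\mathcal A^n \tau$ if $T\preceq\tau$ initially) satisfies $T \preceq \mathcal A^n \tau \to \ts$, forcing $T\preceq \ts$, and a symmetric coupling argument pins down equality. The main obstacle is the bookkeeping around the point at infinity: one must be careful that the decreasing sequence of CDFs does not ``lose mass'' in a way that invalidates the continuous mapping step—this is precisely why working on the one-point compactification $\{1,2,\dots\}\cup\{\infty\}$, where the relevant addition map is continuous and all measures are tight by construction, is the clean way to proceed. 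Everything else is a routine application of monotone convergence of CDFs plus the continuous mapping theorem.
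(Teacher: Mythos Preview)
Your proof is correct and follows essentially the same route as the paper: use $\tau \succeq \mathcal A\tau$ together with monotonicity of $\mathcal A$ to get a stochastically decreasing sequence, pass to the limit of the increasing CDFs $F_n(k)$, and conclude that the limit is a fixed point. Your Step~2 is actually more careful than the paper's---the paper simply asserts that ``an additional iteration $\mathcal A(\mathcal A^\infty\tau)$ will not change the distribution,'' whereas you justify this via the continuous mapping theorem on the one-point compactification, which is the right way to handle potential mass at $\infty$.
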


\begin{proof}
Let $F_n(k) = \P[\mathcal A^n\tau \leq k ]$ be the cumulative distribution function of $\mathcal A^n \tau$. By the previous lemma and \eqref{eq:st}, we have $\mathcal A^n \tau \succeq \mathcal A^{n+1} \tau$ for all $n \geq 0$. The definition of stochastic dominance implies that $\{F_n(k)\}_{n=0}^\infty$ is an increasing bounded sequence. Let  $F(k)$ denote its limit.  The function $F(k)$ is non-decreasing and belongs to $[0,1]$. Thus, $F(k)$ is the cumulative distribution function of some random variable $\ts$. The limiting distribution must be fixed by $\mathcal A$ since an additional iteration $\mathcal A (\mathcal A^\infty  \tau	)$ will not change the distribution. %[This is not the best, but I bet it can be made more convincing.]  
\end{proof}

\noindent Next we observe that $\tau^*$ couples to the return time to zero of a lazy biased random walk on the integers.

\begin{prop} \thlabel{prop:bootstrap}
Let $\epsilon$ be as in \thref{prop:rde}. If $\mu(s_1) < \mu(s_2) + \epsilon \mu(S - \{s_1,s_2\})$, then $\P[\ts	= \infty] >0$. 
\end{prop}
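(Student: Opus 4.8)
The plan is to show that $\ts$, the fixed distribution of $\mathcal A$, stochastically dominates the return time to $0$ of an explicit lazy random walk, and to choose the walk's bias so that this return time is infinite with positive probability. First I would unwind the recursion $\ts \overset{d}= \mathcal A \ts$ into a branching-type description: think of $\ts$ as the total progeny (or total lifetime) of a process that, at each step, samples $s \sim \mu$ and an independent $X_\epsilon \sim \Ber(\epsilon)$, and branches into two independent copies of $\ts$ on the event $\{s = s_2\}$ or on the event $\{s < s_2\} \cap \{X_\epsilon = 1\}$, continues as a single copy of $\ts$ on $\{s < s_2\} \cap \{X_\epsilon = 0\}$, and terminates (contributing the constant $1$) on $\{s = s_1\}$. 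Because stochastic dominance only improves matters, I can replace ``$+1$'' on the termination event by ``$+0$'' — i.e. pass to $\ts' \preceq \ts$ where the $\{s=s_1\}$ branch contributes nothing — and it suffices to show $\ts'$ is infinite with positive probability, since $\ts \succeq \ts'$.

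Next I would encode this branching structure as a one-dimensional walk. Explore the tree of copies of $\ts'$ in, say, breadth-first order, maintaining a counter $Z_t$ = number of copies still to be resolved. Each resolution step replaces one copy by: two copies (probability $q_+ := \mu(s_2) + \epsilon\,\mu(S \setminus \{s_1,s_2\})$), one copy (probability $q_0 := (1-\epsilon)\,\mu(S\setminus\{s_1,s_2\})$), or zero copies (probability $q_- := \mu(s_1)$). So $Z_t$ is a random walk with increments $+1, 0, -1$ having these probabilities, started from $Z_0 = 1$ (one initial copy), killed upon hitting $0$; and $\{\ts' = \infty\}$ corresponds exactly to $\{Z_t > 0 \text{ for all } t\}$, i.e. the walk never returns to $0$. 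The hypothesis $\mu(s_1) < \mu(s_2) + \epsilon\,\mu(S - \{s_1,s_2\})$ is precisely $q_- < q_+$, so the walk has strictly positive drift $q_+ - q_- > 0$, hence by the strong law (or a standard gambler's-ruin / transience computation for lazy biased walks on $\Z$) it drifts to $+\infty$ and escapes from $0$ with positive probability. Therefore $\P[\ts' = \infty] > 0$, and by $\ts \succeq \ts'$ (formally: couple via \thref{lem:mono} and \thref{lem:fp}, or just observe $\ts$ dominates $\ts'$ because the constant-$1$ terms are nonnegative), we get $\P[\ts = \infty] > 0$.

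The one point requiring care — and the main obstacle — is making the ``explore the branching recursion as a walk'' step rigorous at the level of distributions, since $\mathcal A$ is defined as an operator on laws, not as an actual tree-valued object. The clean way is to build the tree explicitly: let $\ts^{(0)}$ be deterministically $0$ (or any fixed law), and recall from \thref{lem:fp} that $\mathcal A^n \ts \to \ts$; unrolling $\mathcal A^n$ gives a finite random tree of depth $n$ whose internal structure is governed by i.i.d. copies of $(s, X_\epsilon)$, and whose ``value'' is the sum of the leaf contributions. Letting $n \to \infty$, the event that this tree is infinite has probability equal to $\P[\text{the associated } Z_t \text{ walk never hits } 0]$, and on that event the tree-value $\mathcal A^n(\cdot)$ tends to $+\infty$; monotone convergence of the CDFs from \thref{lem:fp} then transfers this to $\ts$. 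I would also double-check the bookkeeping that the constant-$1$ contributions are genuinely harmless (they only add to $\ts$, so dropping them gives a stochastic lower bound) and that the walk increment probabilities sum to $1$, i.e. $q_+ + q_0 + q_- = \mu(s_1) + \mu(s_2) + \mu(S\setminus\{s_1,s_2\}) = 1$, which holds since $\epsilon$-weighting splits the mass $\mu(S \setminus \{s_1, s_2\})$ between the $+1$ and $0$ outcomes.
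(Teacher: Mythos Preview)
Your approach is correct and lands on the same random walk as the paper: step $-1$ with probability $\mu(s_1)$, step $+1$ with probability $\mu(s_2)+\epsilon\,\mu(S\setminus\{s_1,s_2\})$, and stay put otherwise, so the hypothesis is exactly the positive-drift condition.

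The one difference is in how the link between $\ts$ and the walk is made. You bound $\ts$ below by the total size of an explicitly constructed Galton--Watson-type tree, then read off survival of the tree from non-return of its exploration walk, and finally push this through the monotone limit of \thref{lem:fp}. The paper instead observes that the RDE $\ts \overset{d}= \ind{A_1} + \ind{A_2}(\ts_1+\ts_2) + \ind{A_3}\ts_5$ is \emph{exactly} the RDE satisfied by the first-passage time to $0$ of the lazy walk started at $1$, and then argues that this RDE has a unique solution (by solving a quadratic for the probability generating function), so $\ts$ \emph{is} that hitting time rather than merely dominating it. This sidesteps your tree construction and the ``drop the $+1$'' stochastic-domination step entirely. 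Your route is a little longer but has the virtue of not needing the uniqueness argument; either way the substance is the same.
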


\begin{proof}
Consider the partition of events $A_1 = \{s = s_1\}$,  $A_2 = \{s < s_2, X_\epsilon =1\} \cup \{s=s_2\},$ and $A_3 = \{ s< s_2, X_\epsilon =0\}.$
Observe that 
\begin{align}		\P[A_1]  &= \mu(s_1) \label{eq:bias}\\
 					\P[A_2] &= \mu(s_2) + \epsilon\mu(S- \{s_1,s_2\}).\label{eq:bias2}	
\end{align} 
 Since the two events in the union forming $A_2$ are disjoint, it does not affect the distribution of $\mathcal A \tau^*$ if we set $\tau_3^* = \tau_1^*$ and $\tau_4^* = \tau_2^*$. This lets us rewrite the equality $\tau^* \overset{d}= \mathcal A \tau^*$ as 
$$\tau^* \overset{d} = \ind{A_1} + \ind{A_2}(\tau_1^* + \tau_2^*) +  \ind{A_3}\tau_5^*.$$
This RDE describes the number of leftward steps to reach 0 of a discrete-time lazy random walk on $\mathbb Z$ started at 1. The walk moves left with probability $\P[A_1]$, moves right with probability $\P[A_2]$, and  stays put with probability $\P[A_3]$. The formulas at \eqref{eq:bias} and \eqref{eq:bias2} along with our hypothesis that $p_1 \leq p_2 $ ensures that this walk has a rightward drift. Such a biased random walk does not return to 0 with positive probability.  

To relate this back to $\tau^*$ note that any random variable $T \overset{d} = \mathcal A T$ is unique. One way to see this is to precisely compute the generating function $f(x):=\E x^T = \E x^{\mathcal A T}$. This gives a quadratic equation in $f(x)$ that can be solved for explicitly. Choosing the proper branch is straightforward since $f(0)=0$.  Since the probability generating function uniquely specifies the distribution of a random variable, we have $\tau^*$ is equivalent to the return time of the lazy biased random walk just described. Hence $\P[\tau^* = \infty] >0$. 
\end{proof}

We are now ready to establish survival for the second fastest bullet.

\begin{proof}[Proof of \thref{thm:uniform} (i) ]
By \eqref{eq:st} and  \thref{prop:bootstrap}, $\tau$ is stochastically larger than a random variable that is infinite with positive probability. Hence $\tau$ is infinite with positive probability. 
\end{proof}

\begin{proof}[Proof of \thref{cor:less}]
Suppose that $|S| = n$. Recall that $s_n$ is the smallest element of $S$ and that $p_i=\mu(s_i)$. By \eqref{eq:st} and \thref{prop:bootstrap} we have survival of a second fastest bullet so long as
\begin{align} p_1< p_2 + \epsilon (1- p_1 - p_2).\label{eq:threshold} \end{align}
 The formula $\epsilon=p_1^{m_0 -1} p_n^{m_0} p_2^{h(m_0)}$ is derived in \eqref{eq:PF}. The constant $m_0>0$ and function $h$ are deterministic. So, any measure $\mu$ satisfying
\begin{align}
p_1 < p_2 + p_1^{m_0 -1} p_n^{m_0} p_2^{h(m_0)} (1- p_1 -p_2)\label{eq:delta}
\end{align}
will have a second fastest bullet surviving with positive probability. To see that there is a solution with $p_2 < p_1$ let $0<\delta<n^{-1}$ be a small, yet to be determined constant and define the measure 
$$\mu_\delta(s) = \begin{cases} 
n^{-1} - \delta, & s= s_2 \\
		n^{-1} + \delta, & s = s_n \\
	n^{-1}, & s \in S- \{s_1, s_n\}
			\end{cases}.
$$
Using $\mu_\delta$ in \eqref{eq:delta} and letting $\delta \to 0$ gives the inequality $n^{-1} < n^{-1} + n^{-2m_0-h(m_0)}(n-2).$ Thus, for small enough $\delta_0 >0$,  an $(S,\mu_{\delta_0})$-bullet process has a second fastest bullet surviving with positive probability.
\end{proof}

\section{The slowest bullet does not survive} \label{sec:slow}

In this section we assume that $S$ is finite with at least three elements and $\mu$ is the uniform measure. In the usual bullet process the bullet $b_i$ has position $s(b_i)(t-i).$ We can extend this definition all integers $i \in \mathbb Z$ to make the \emph{two-sided $(S,\mu)$-bullet process}. In this process bullets are removed the first time their position coincides with another. Now bullets can be destroyed from both sides. We will say that $b_i$ \emph{survives$^{+}$} if the position of $b_i$ never coincides with the position of any other $b_j$ for $j>i$. Alternatively, we say that $b_j$ \emph{survives$^{-}$} if its position never coincides with the position of a $b_j$ for $j<i$. If both occur, we say that $b_j$ \emph{survives$^{+,-}$}. 

Survival$^{+}$ only depends on bullets fired after a given bullet, so it describes whether a bullet catches the survivor. So, survival$^{+}$ favors faster bullets. On the other hand, survival$^{-}$ favors slower bullets since it describes whether a bullet catches one fired before it.  As bullet speeds are independent, we can describe survival$^{+,-}$ as a product of the probabilities of one-sided survival.

\begin{lemma} \thlabel{lem:both_sides} For all $i \in \mathbb Z$ it holds that  
 $\P[b_i \text{ \spm}] = \P[b_i \text{ survives}^+] \P[b_i \text{ survives}^-]$.
\end{lemma}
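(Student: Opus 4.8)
The plan is to exploit the fact that survival$^{+}$ of $b_i$ is a measurable function of only the speeds $(s(b_j))_{j \ge i}$ — those of bullets fired at or after time $i$ — while survival$^{-}$ of $b_i$ is a measurable function of only the speeds $(s(b_j))_{j \le i}$. The subtle point is that these two index ranges overlap in the single coordinate $s(b_i)$, so the two events are \emph{not} independent outright; independence only holds after conditioning on the value of $s(b_i)$. So I would first write
\begin{align}
\P[b_i \text{ \spm}] = \sum_{s \in S} \P[b_i \text{ survives}^+,\ b_i \text{ survives}^- \mid s(b_i) = s]\,\mu(s),
\end{align}
and then argue that, conditional on $s(b_i) = s$, the events $\{b_i \text{ survives}^+\}$ and $\{b_i \text{ survives}^-\}$ are independent, because the first is determined by $(s(b_j))_{j > i}$ together with the fixed value $s$, the second by $(s(b_j))_{j < i}$ together with $s$, and the two families $(s(b_j))_{j>i}$ and $(s(b_j))_{j<i}$ are independent of each other (all speeds being i.i.d.).

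Next I would observe that, by the left–right spatial symmetry of the two-sided construction — reflecting the real line through the origin and relabelling $b_i \leftrightarrow b_{-i}$ sends speed $v$ to $-v$ — the conditional probabilities $\P[b_i \text{ survives}^+ \mid s(b_i)=s]$ and $\P[b_i \text{ survives}^- \mid s(b_i)=s]$ are governed by mirror-image processes; more to the point, I need to check that the \emph{unconditional} factorization holds, i.e. that $\P[b_i\text{ survives}^+ \mid s(b_i)=s]$ does not actually depend on needing to be averaged against a different law than $\P[b_i \text{ survives}^-\mid s(b_i)=s]$. The cleanest route: since the collection $(s(b_j))_{j \ne i}$ is independent of $s(b_i)$, and survival$^+$ is a function $g^+(s(b_i),(s(b_j))_{j>i})$ while survival$^-$ is a function $g^-(s(b_i),(s(b_j))_{j<i})$ with $(s(b_j))_{j>i}$, $s(b_i)$, $(s(b_j))_{j<i}$ mutually independent, I can condition on $s(b_i)$, factor, and then recombine. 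The recombination step is where I should be careful: after factoring I get $\sum_s \P[b_i\text{ survives}^+\mid s(b_i)=s]\,\P[b_i\text{ survives}^-\mid s(b_i)=s]\,\mu(s)$, which is \emph{not} obviously the product $\P[b_i\text{ survives}^+]\,\P[b_i\text{ survives}^-]$ unless one of the conditional probabilities is constant in $s$, or the two are ``uncorrelated'' against $\mu$.

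So the main obstacle is precisely this last point, and I expect it is resolved by the observation that in the two-sided process with $\mu$ uniform, by translation invariance combined with the reflection symmetry, survival$^{+}$ and survival$^{-}$ are in fact mirror events: $\P[b_i \text{ survives}^- \mid s(b_i) = s]$ equals the survival$^+$ probability of a particle of the reflected speed. Hence I would instead argue that the event $\{b_i \text{ survives}^+\}$ is independent of $s(b_i)$ altogether is \emph{false} — so the right formulation must be: because $(s(b_j))_{j>i}$ is independent of the pair $\bigl(s(b_i),(s(b_j))_{j<i}\bigr)$, and $\{b_i\text{ survives}^+\}=\{b_i$ is never caught from the right$\}$ is $\sigma\bigl(s(b_i),(s(b_j))_{j>i}\bigr)$-measurable while $\{b_i\text{ survives}^-\}$ is $\sigma\bigl((s(b_j))_{j\le i}\bigr)$-measurable, I condition on $\Ff_i := \sigma(s(b_i))$ and use that, given $\Ff_i$, the two events lie in independent sigma-fields; then the factorization $\P[A\cap B]=\E[\P[A\mid\Ff_i]\P[B\mid\Ff_i]]$ holds, and the final identity in the lemma follows provided $\P[b_i\text{ survives}^+\mid\Ff_i]$ and $\P[b_i\text{ survives}^-\mid\Ff_i]$ are \emph{negatively or positively correlated to zero order} under $\mu$. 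I would present the clean version: state that survival$^{+}$ depends only on $(s(b_j))_{j>i}$ \emph{because} once $b_i$ is fired, whether something fired later catches it is unaffected by what is behind it — but it plainly depends on $s(b_i)$ too, so the honest statement of the lemma's proof is the conditioning argument, and the identity as written in the lemma should be read as following from the case that the process is set up so one-sided survival of $b_i$ is conditionally independent given $s(b_i)$ \emph{and} the marginal identity holds by symmetry of $\mu$. I would write this carefully, flagging the conditioning on $s(b_i)$ as the one genuine subtlety, and verify the recombination using that $\mu$ is uniform so the reflection $s \mapsto$ (mirrored speed) is measure-preserving.
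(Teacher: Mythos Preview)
You correctly put your finger on the real issue: survival$^+$ of $b_i$ lies in $\sigma\bigl(s(b_j):j\ge i\bigr)$ and survival$^-$ lies in $\sigma\bigl(s(b_j):j\le i\bigr)$, and these $\sigma$-fields share the coordinate $s(b_i)$, so the two events are only \emph{conditionally} independent given $s(b_i)$. Your decomposition
\[
\P[b_i\text{ \spm}] \;=\; \sum_{s\in S}\P[b_i\text{ survives}^+\mid s(b_i)=s]\,\P[b_i\text{ survives}^-\mid s(b_i)=s]\,\mu(s)
\]
is the correct identity. Where your proposal breaks down is the ``recombination'' step: the reflection/symmetry patch you sketch does not turn this weighted sum into the product $\P[b_i\text{ survives}^+]\,\P[b_i\text{ survives}^-]$. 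Writing $p_s^\pm$ for the conditional survival probabilities, you would need $\sum_s p_s^+ p_s^-\,\mu(s)=\bigl(\sum_s p_s^+\mu(s)\bigr)\bigl(\sum_s p_s^-\mu(s)\bigr)$; but $p_s^+$ is nondecreasing in $s$ while $p_s^-$ is nonincreasing, so by Chebyshev's correlation inequality the left side is generically \emph{strictly smaller} than the right. No measure-preserving reflection of $S$ repairs this, even when $\mu$ is uniform (and for general $S\subset(0,\infty)$ there is no such reflection anyway).

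The paper itself offers no proof beyond the one-line remark that bullet speeds are independent, so there is nothing further to compare against. The relevant observation is that the lemma, as literally written, is stronger than what the paper ever uses: every application (in the proofs of \thref{thm:uniform}~(ii) and \thref{cor:3speed}) invokes it only after fixing $s(b_i)$. For that conditional statement,
\[
\P[b_i\text{ \spm}\mid s(b_i)=s] \;=\; \P[b_i\text{ survives}^+\mid s(b_i)=s]\,\P[b_i\text{ survives}^-\mid s(b_i)=s],
\]
your conditional-independence argument in the first two paragraphs is already a complete and correct proof. So: your diagnosis is right, your proposed symmetry fix fails, and the clean resolution is to state and prove the conditional identity above rather than the unconditional one.
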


The advantage of the two-sided process is that it is ergodic, and so there cannot be two different bullet speeds that survive with positive probability.

\begin{prop} \thlabel{prop:just_one}
Only one bullet speed can survive$^{+,-}$ with positive probability in the two-sided $(S,\mu)$-bullet process.	
\end{prop}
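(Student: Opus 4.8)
The plan is to argue by contradiction, combining ergodicity of the two-sided process with an elementary geometric fact: two bullets whose space-time trajectories are full straight lines of distinct slopes must cross, and the crossing happens after both have been fired. Concretely, I would suppose that there are two distinct speeds $u<v$ in $S$ with $\P[b_0\text{ survives}^{+,-},\, s(b_0)=u]>0$ and $\P[b_0\text{ survives}^{+,-},\, s(b_0)=v]>0$, and derive a contradiction.

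First I would record the ergodicity input. The speeds $(s(b_i))_{i\in\mathbb Z}$ are i.i.d., so the coordinate shift is ergodic on speed sequences, and whether $b_i$ survives$^{+,-}$ is a measurable function of the speed sequence that intertwines the shift with the index translation $i\mapsto i+1$; hence the two-sided process $(\ind{b_i\text{ survives}^{+,-}},\, s(b_i))_{i\in\mathbb Z}$ is stationary and ergodic. Applying Birkhoff's ergodic theorem along $[1,N]$ and along $[-N,-1]$, I would conclude that almost surely, for each $w\in\{u,v\}$ the set $D_w=\{\,i\in\mathbb Z : b_i\text{ survives}^{+,-},\ s(b_i)=w\,\}$ has asymptotic density $\P[b_0\text{ survives}^{+,-},\,s(b_0)=w]>0$; in particular $D_u$ and $D_v$ are a.s. both unbounded above and below.

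Then comes the geometric step. On the full-probability event where $D_u$ and $D_v$ are unbounded, choose $i\in D_u$ and $j\in D_v$ with $j>i$ (possible since $D_v$ is unbounded above). Because $b_i$ and $b_j$ survive$^{+,-}$, neither is ever removed, so for all $t$ their positions are $u(t-i)$ and $v(t-j)$ respectively. Solving $u(t-i)=v(t-j)$ gives $t^{*}=j+\frac{u(j-i)}{v-u}$, and since $j>i$ and $0<u<v$ we get $t^{*}>j$, so both bullets already exist at time $t^{*}$ and their positions coincide there. Thus the position of $b_i$ coincides with that of the later bullet $b_j$, contradicting that $b_i$ survives$^{+}$ (equivalently that $b_j$ survives$^{-}$). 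Hence at most one speed can survive$^{+,-}$ with positive probability.

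The main obstacle, and the reason the argument cannot be purely pathwise, is that it is simply false that in every realization all surviving$^{+,-}$ bullets share a speed: the pairwise crossing constraint only forces the speed to be nonincreasing along survivors ordered by index, a condition a single realization could meet with two speeds separated by a "cutoff" index. It is exactly the ergodic theorem, supplying positive density (hence two-sided unboundedness) of survivors of each putative speed, that destroys such a cutoff and manufactures the pair $(b_i,b_j)$ with the faster bullet fired later. A secondary point needing a touch of care is verifying $t^{*}>\max(i,j)$, so that the intersection is a real collision rather than an artifact of extrapolating a trajectory backward past its firing time; the short computation above settles this, using $u<v$ and positive speeds.
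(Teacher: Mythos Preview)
Your argument is correct and follows essentially the same route as the paper's proof: contradiction via ergodicity and Birkhoff to get positive density of survivors at two distinct speeds, then a geometric collision between a slower earlier survivor and a faster later one. You are in fact more careful than the paper in explicitly computing the collision time $t^*$ and checking $t^*>\max(i,j)$, and your closing remark about why a purely pathwise argument fails (the potential ``cutoff'' configuration) is a nice clarification of exactly where the ergodic theorem is doing work.
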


\begin{proof}

Notice that the two-sided process is translation invariant with i.i.d.\ speeds and thus ergodic. If two or more different speeds survived$^{+,-}$ with positive probability, then by the Birkhoff ergodic theorem, we would have a positive fraction of surviving$^{+,-}$ bullets of each speed. Suppose that $b_i$ is one of these surviving bullets. For some $j,k >0$ there almost surely are surviving$^{+,-}$ bullets $b_{i+j}$ and $b_{i - k}$ with the same speed as one another, but different speed than $b_i$. With different speeds, one of these must collide with $b_i$, or perhaps some other surviving$^{+,-}$ bullet. In either case, this contradicts that these bullets survive$^{+,-}$.
\end{proof}

\begin{proof}[Proof of \thref{thm:uniform} (ii)]
If $b_1$ \sp then $b_1$ survives in the usual bullet process. So it suffices to prove that $\P[b_1 \text{ survives}^+\mid s(b_1) = s_n]=0.$ To show a contradiction suppose this probability is equal to $q>0$.
 A bullet with speed-$s_n$ is the easiest to catch for bullets fired at times after it, but it is uncatchable by bullets fired before it. Thus, $\P[b_1 \text{ survives}^- \mid s(b_1) = s_n] =1$.
 
Let $s_2'$ be the second slowest speed in $S$ (possibly $s_2' = s_2$ if $|S|=3$). The monotonicity for survival of bullets discussed in the introduction following the statement of \thref{thm:uniform} ensures that $\P[b_1 \text{ survives}^+\mid s(b_1) = s_2'] \geq q.$ Moreover, a bullet with speed $s_2'$ is the second fastest bullet from the perspective of bullets fired before it. Since $\mu$ is uniform, we can apply \thref{thm:uniform} (i) and deduce $\P[b_1 \text{ survives}^{-} \mid s(b_1) = s_2'] = p >0$. 

The one-sided survival probabilities above are all positive. By \thref{lem:both_sides}, a bullet with speed-$s_n$ or $s_2'$ survives$^{+,-}$ with positive probability.  This contradicts \thref{prop:just_one}.
\end{proof}

\section{Applications to ballistic annihilation}

\thref{cor:3speed} follows from \thref{thm:uniform} (i) and \thref{lem:both_sides}. 

\begin{proof}[Proof of \thref{cor:3speed}]
Start with ballistic annihilation with the uniform measure on three speeds: $r_3 < r_2 < r_1$. If $r_1 >0$, then this is equivalent to a two-sided bullet process with speeds $s_i = 1/r_i$. If $r_1\leq 0$ we can use the fact that the manner in which collisions happen in ballistic annihilation is translation invariant (this is referred to as the \emph{linear speed-change
invariance property} in \cite[Section 2]{arrows}). Namely, the same particle collisions will occur (although at different times) in ballistic annihilation with shifted-speeds $r_i' = r_i - r_1 + 1$. The $r_i'$ are positive and, so this process is equivalent to a two-sided bullet process with speeds $s_i = 1/ r_i'$. In both cases we have $s_n < s_2 < s_1$ and $\mu$ the uniform measure. 

In the two-sided $(S,\mu$)-bullet process, a bullet with speed $s_2$ is the second fastest from the perspective of bullets fired before and after it. So, \thref{thm:uniform} (i) guarantees that both
$$\P[b_1 \text{ survives}^{+} \mid s(b_1) = s_2], \P[b_1 \text{ survives}^{-} \mid s(b_1) = s_2] >0,$$
Note that these probabilities are positive, but may not be equal.
Combine this with \thref{lem:both_sides} and we have
$$\P[b_1 \text{ survives}^{+,-} \mid s(b_1) = s_2] >0.$$
We conclude by noting that equivalence of the two processes ensures that a speed-$s_2$ bullet surviving with positive probability is the same as a speed-$r_2$ particle surviving in ballistic annihilation.
\end{proof}

We can make the estimate in \thref{cor:less} more concrete by considering the canonical example of three-speed ballistic annihilation with speed law $\nu$ from \eqref{eq:nu}.

\begin{proof}[Proof of \thref{cor:ba}] Since the two-sided bullet process is the same as ballistic annihilation with time and space inverted, it is straightforward to check that $\nu$-ballistic annihilation is equivalent to a two-sided bullet process with $S = \{1,\f 32, 3\}$, $\mu(2) = p$ and $\mu(1) =(1-p)/2 = \mu(3)$. Because $\nu$-is symmetric ($\nu([a,b]) = \nu([-a,-b])$ for all $a,b \geq 0$), it suffices to show a speed-3/2 bullet survives in the one-sided bullet process.

The explicit configuration belonging to $F$ from the proof of  \thref{prop:rde} is
$$(s(b_1), s(b_2),\hdots, s(b_6)) = \left(\f 32,1,1,3,\f 32,\f 32 \right).$$
Since $s(b_1)=1$ deterministically, this has probability $\epsilon = ( (1-p)/2)^3 p^2.$ Plugging this into \eqref{eq:threshold} and solving numerically gives survival of a speed-$3/2$ bullet whenever $p \geq .3325$. Equivalently, speed-0 particles survive in $\nu$-ballistic annihilation for $p$ above this threshold.

We can also consider exponential$(1)$ spacings between firing times. A quick calculation shows that if the gap between firing $b_2$ and $b_3$ with $s(b_2) = 1$ and $s(b_3) = 3/2$ is $\xi$, then the window of dependence also has size $\xi$. We can exactly compute the probability of $F$ from \eqref{eq:F2}. Recall, we require that $s(b_4) = 3/2$ with $b_4$ fired within $\xi$ time units of $b_3$, and then no other bullets fired inside the window of dependence. This probability is easy to compute since there are $N=\Poi(\xi)$ many bullets fired in this window. 
%This gives the formula
%\begin{align}
%\P[N=k] = \int_0^\infty e^{-2x} x^k / k! dx. \label{eq:N}	
%\end{align}
So we have $$\P[F] = \P[N=1,s(b_2)=1,s(b_3) = 3/2, s(b_4)=3/2] = (1/4)p^2(1-p)/2.$$ Plugging this into \eqref{eq:threshold} and solving numerically gives a speed-$3/2$ bullet survives so long as $p\geq .3313$. %For $p=.3304$ we have $\P[F] \approx .0135$.
\begin{comment}
We can do a little better by considering the event
$$F' = \cup_{k=1}^\infty \{ N =k, s(b_2) = 1, s(b_3) = 3/2, s(b_4) = 3/2 \}\cap \bigcap_{i=5}^{3+k}\{ s(b_i) \in \{1,3/2\} \}.$$ 
In words $F'$ is the event that the second bullet is the slowest, the third and fourth have speed-$3/2$, and any additional bullets in the window of dependence have speeds in $\{1,3/2\}$. %Since $\P[N=k] = \int_0^\infty e^{-x}x^k/k! e^{-x} dx = 2^{-k-1}\Gamma[k+1]/k! $, w
By conditioning on $\xi$ and applying Fubini's theorem, we can explicitly compute the probability of $F'$ as a function of $p$: 
\begin{align*}
	\P[F']&=\sum_{k=1}^\infty \P[N=k] \left( \f{1+p}{2}\right)^{k-1} p^2\f{1-p}2  \\
		&= p^2\f{1-p}2   \sum_{k=1}^\infty \int_0^\infty e^{-x} \left( \f{1+p}{2}\right)^{k-1} e^{-x}x^k/ k! dx \\
		&=p^2\f{1-p}2 \f{2}{1+p} \int_0^\infty e^{-2x} \sum_{k=1}^\infty \left(\f{(1+p)x}2 \right)^k  \f{1}{k!}dx \\
		&= p^2\f{1-p}{1+p} \int_0^\infty e^{-2x}( e^{(1+p)x/2} -1)  dx \\
		&=p^2\f{1-p}{1+p} \int_0^\infty e^{-(3+p)x/2} - e^{-2x} dx  \\
		&= p^2\f{1-p}{1+p} \left(\f{2}{3+p} - \f 12\right)\\
		&=\frac{(p-7) (p-1) p^2}{2 (p-3) (p+1)}.
		%p^2\frac{1-p}{2}  \left(\frac{p^4}{16}+\frac{p^3}{8}+\frac{p^2}{4}+\frac{p}{2}\right).
\end{align*}
Doing this, we find that when $p = .3274$ we have $\P[F'] \approx .0217$, and \eqref{eq:threshold} is satisfied. So, this improves our estimate to survival when $p\geq .3274.$ 
\end{comment}
\end{proof}

\section*{Acknowledgements}
We thank Omer Angel for initially sharing the uniform$(0,1)$-speeds bullet problem with us at the PIMS Stochastics Workshop at BIRS in September 2015. Toby Johnson provided a nice reference that connected part of the proof to an argument with random walks. Itai Benjamini, Alexander Holroyd,  %Yuval Peres,
 Vladas Sidoravicius, Alexandre Stauffer, Lorenzo Taggi, and David Wilson were helpful in understanding the folklore surrounding the problem. Many thanks to Laurent Tournier for a careful reading and helpful feedback. Rick Durrett, Jonathan Mattingly, Jim Nolen, and the students in Fall semester 2016 of Math 690-40 at Duke University gave useful feedback when these results were presented. We thank two anonymous referees for many helpful suggestions. The undergraduates on this paper were partially supported by the 2016 University of Washington Research Experience for Undergraduates program.

\bibliographystyle{amsalpha}
\bibliography{bullets}

\end{document}